\documentclass[12pt,reqno]{amsart}
\usepackage{amscd,amssymb,amsmath,mathabx,etex,tikz,tikz-cd,float,cancel}
\newtheorem{thm}[equation]{Theorem}
\numberwithin{equation}{section}
\newtheorem{cor}[equation]{Corollary}

\newtheorem{rmk}[equation]{Remark}

\newtheorem{lem}[equation]{Lemma}

\newtheorem{defin}[equation]{Definition}

\newtheorem{prop}[equation]{Proposition}

\begin{document}
\raggedbottom \voffset=-.7truein \hoffset=0truein \vsize=8truein
\hsize=6truein \textheight=8truein \textwidth=6truein
\baselineskip=18truept

\def\mapright#1{\ \smash{\mathop{\longrightarrow}\limits^{#1}}\ }
\def\mapleft#1{\smash{\mathop{\longleftarrow}\limits^{#1}}}
\def\mapup#1{\Big\uparrow\rlap{$\vcenter {\hbox {$#1$}}$}}
\def\mapdown#1{\Big\downarrow\rlap{$\vcenter {\hbox {$\ssize{#1}$}}$}}
\def\mapne#1{\nearrow\rlap{$\vcenter {\hbox {$#1$}}$}}
\def\mapse#1{\searrow\rlap{$\vcenter {\hbox {$\ssize{#1}$}}$}}
\def\mapr#1{\smash{\mathop{\rightarrow}\limits^{#1}}}
\def\ss{\smallskip}
\def\ar{\arrow}
\def\vp{v_1^{-1}\pi}
\def\at{{\widetilde\alpha}}
\def\sm{\wedge}
\def\la{\langle}
\def\ra{\rangle}
\def\on{\operatorname}
\def\ol#1{\overline{#1}{}}
\def\spin{\on{Spin}}
\def\cat{\on{cat}}
\def\lbar{\ell}
\def\qed{\quad\rule{8pt}{8pt}\bigskip}
\def\ssize{\scriptstyle}
\def\a{\alpha}
\def\tz{tikzpicture}
\def\bz{{\Bbb Z}}
\def\Rhat{\hat{R}}
\def\im{\on{im}}
\def\ct{\widetilde{C}}
\def\ext{\on{Ext}}
\def\sq{\on{Sq}}
\def\eps{\epsilon}
\def\ar#1{\stackrel {#1}{\rightarrow}}
\def\br{{\bold R}}
\def\bC{{\bold C}}
\def\bA{{\bold A}}
\def\bB{{\bold B}}
\def\bD{{\bold D}}
\def\bh{{\bold H}}
\def\bQ{{\bold Q}}
\def\bP{{\bold P}}
\def\bx{{\bold x}}
\def\bo{{\bold{bo}}}
\def\si{\sigma}
\def\Vbar{{\overline V}}
\def\dbar{{\overline d}}
\def\wbar{{\overline w}}
\def\Sum{\sum}
\def\tfrac{\textstyle\frac}
\def\tb{\textstyle\binom}
\def\Si{\Sigma}
\def\w{\wedge}
\def\equ{\begin{equation}}
\def\AF{\operatorname{AF}}
\def\b{\beta}
\def\G{\Gamma}
\def\D{\Delta}
\def\L{\Lambda}
\def\g{\gamma}
\def\k{\kappa}
\def\psit{\widetilde{\Psi}}
\def\kt{\widetilde{K}}
\def\psiu{{\underline{\Psi}}}
\def\thu{{\underline{\Theta}}}
\def\aee{A_{\text{ee}}}
\def\aeo{A_{\text{eo}}}
\def\aoo{A_{\text{oo}}}
\def\aoe{A_{\text{oe}}}
\def\vbar{{\overline v}}
\def\endeq{\end{equation}}
\def\sn{S^{2n+1}}
\def\zp{\bold Z_p}
\def\cR{{\mathcal R}}
\def\P{{\mathcal P}}
\def\cF{{\mathcal F}}
\def\cQ{{\mathcal Q}}
\def\notint{\cancel\cap}
\def\cj{{\cal J}}
\def\zt{{\bold Z}_2}
\def\bs{{\bold s}}
\def\bof{{\bold f}}
\def\bq{{\bold Q}}
\def\be{{\bold e}}
\def\Hom{\on{Hom}}
\def\ker{\on{ker}}
\def\kot{\widetilde{KO}}
\def\coker{\on{coker}}
\def\da{\downarrow}
\def\colim{\operatornamewithlimits{colim}}
\def\zphat{\bz_2^\wedge}
\def\io{\iota}
\def\Om{\Omega}
\def\Prod{\prod}
\def\e{{\cal E}}
\def\zlt{\Z_{(2)}}
\def\exp{\on{exp}}
\def\abar{{\overline a}}
\def\xbar{{\overline x}}
\def\ybar{{\overline y}}
\def\zbar{{\overline z}}
\def\Rbar{{\overline R}}
\def\nbar{{\overline n}}
\def\Gbar{{\overline G}}
\def\qbar{{\overline q}}
\def\bbar{{\overline b}}
\def\et{{\widetilde E}}
\def\ni{\noindent}
\def\coef{\on{coef}}
\def\den{\on{den}}
\def\lcm{\on{l.c.m.}}
\def\vi{v_1^{-1}}
\def\ot{\otimes}
\def\psibar{{\overline\psi}}
\def\thbar{{\overline\theta}}
\def\mhat{{\hat m}}
\def\ihat{{\hat i}}
\def\jhat{{\hat j}}
\def\khat{{\hat k}}
\def\exc{\on{exc}}
\def\ms{\medskip}
\def\ehat{{\hat e}}
\def\etao{{\eta_{\text{od}}}}
\def\etae{{\eta_{\text{ev}}}}
\def\dirlim{\operatornamewithlimits{dirlim}}
\def\Gt{\widetilde{G}}
\def\lt{\widetilde{\lambda}}
\def\st{\widetilde{s}}
\def\ft{\widetilde{f}}
\def\sgd{\on{sgd}}
\def\lfl{\lfloor}
\def\rfl{\rfloor}
\def\ord{\on{ord}}
\def\gd{{\on{gd}}}
\def\rk{{{\on{rk}}_2}}
\def\nbar{{\overline{n}}}
\def\MC{\on{MC}}
\def\lg{{\on{lg}}}
\def\cB{\mathcal{B}}
\def\cS{\mathcal{S}}
\def\cP{\mathcal{P}}
\def\N{{\Bbb N}}
\def\Z{{\Bbb Z}}
\def\Q{{\Bbb Q}}
\def\R{{\Bbb R}}
\def\C{{\Bbb C}}
\def\l{\left}
\def\r{\right}
\def\mo{\on{mod}}
\def\xt{\times}
\def\notimm{\not\subseteq}
\def\Remark{\noindent{\it  Remark}}
\def\kut{\widetilde{KU}}

\def\*#1{\mathbf{#1}}
\def\0{$\*0$}
\def\1{$\*1$}
\def\22{$(\*2,\*2)$}
\def\33{$(\*3,\*3)$}
\def\ss{\smallskip}
\def\ssum{\sum\limits}
\def\dsum{\displaystyle\sum}
\def\la{\langle}
\def\ra{\rangle}
\def\on{\operatorname}
\def\od{\text{od}}
\def\ev{\text{ev}}
\def\o{\on{o}}
\def\U{\on{U}}
\def\lg{\on{lg}}
\def\a{\alpha}
\def\bz{{\Bbb Z}}
\def\vareps{\varepsilon}
\def\bc{{\bold C}}
\def\bN{{\bold N}}
\def\nut{\widetilde{\nu}}
\def\tfrac{\textstyle\frac}
\def\b{\beta}
\def\G{\Gamma}
\def\g{\gamma}
\def\zt{{\Bbb Z}_2}
\def\pt{\widetilde{p}}
\def\zth{{\bold Z}_2^\wedge}
\def\bs{{\bold s}}
\def\bx{{\bold x}}
\def\bof{{\bold f}}
\def\bq{{\bold Q}}
\def\be{{\bold e}}
\def\lline{\rule{.6in}{.6pt}}
\def\xb{{\overline x}}
\def\xbar{{\overline x}}
\def\ybar{{\overline y}}
\def\zbar{{\overline z}}
\def\ebar{{\overline \be}}
\def\nbar{{\overline n}}
\def\zb{{\overline z}}
\def\Mbar{{\overline M}}
\def\et{{\widetilde e}}
\def\ni{\noindent}
\def\ms{\medskip}
\def\ehat{{\hat e}}
\def\what{{\widehat w}}
\def\Yhat{{\widehat Y}}
\def\Wbar{{\overline W}}
\def\minp{\min\nolimits'}
\def\mul{\on{mul}}
\def\N{{\Bbb N}}
\def\Z{{\Bbb Z}}
\def\Q{{\Bbb Q}}
\def\R{{\Bbb R}}
\def\C{{\Bbb C}}
\def\notint{\cancel\cap}
\def\se{\operatorname{secat}}
\def\cS{\mathcal S}
\def\cR{\mathcal R}
\def\el{\ell}
\def\TC{\on{TC}}
\def\dstyle{\displaystyle}
\def\ds{\dstyle}
\def\mt{\widetilde{\mu}}
\def\zcl{\on{zcl}}
\def\Vb#1{{\overline{V_{#1}}}}

\def\Remark{\noindent{\it  Remark}}
\title
{$K$-theory and immersions of spatial polygon spaces}
\author{Donald M. Davis}
\address{Department of Mathematics, Lehigh University\\Bethlehem, PA 18015, USA}
\email{dmd1@lehigh.edu}
\date{January 13, 2019}

\keywords{polygon spaces, immersions, $K$-theory, Chern classes}
\thanks {2000 {\it Mathematics Subject Classification}: 57R42, 55N15, 55R80, 57R20.}

\maketitle
\begin{abstract} For $\ell$ an $n$-tuple of positive numbers, $N(\ell)$ denotes the space of isometry classes of oriented $n$-gons in $\R^3$ with side lengths specified by $\ell$. If $\ell$ is generic, $N(\ell)$ is a $2(n-3)$-manifold. We determine the algebra $K(N(\ell))$ and use this to obtain nonimmersions of $N(\ell)$ in Euclidean space for several families of $\ell$. We also use obstruction theory to tell exactly when $N(\ell)$ immerses in $\R^{4n-14}$ for two families of $\ell$'s.
 \end{abstract}
 \section{Introduction}\label{introsec}
If $\ell=(\ell_1,\ldots,\ell_n)$ is an $n$-tuple of positive real numbers, $N(\ell)$ denotes the space of oriented $n$-gons in $\R^3$ with consecutive sides of the specified lengths, identified under translation and rotation of $\R^3$. Edges of the polygon are allowed to intersect. A more formal definition is
$$N(\ell)=\{(z_1,\ldots,z_n)\in (S^2)^n: \sum_{i=1}^n \ell_iz_i=0\}/SO(3).$$
See \cite{HK}, \cite{HR}, or \cite{Kl}. It is clear from this definition that the diffeomorphism type of $N(\ell)$ is not affected by changing the order of the $\ell_i$'s.

Let $[\![n]\!]=\{1,\ldots,n\}$. If there is no subset $S\subset[\![n]\!]$ for which $\ds\sum_{i\in S}\ell_i=\sum_{i\not\in S}\ell_i$, $\ell$ is said to be {\it generic}. If $\ell$ is generic, then $N(\ell)$ is a $2(n-3)$-manifold. (\cite[p.285]{HK}) Throughout this paper, $\ell$ is always assumed to be generic. Generic spatial polygon spaces are classified in \cite{HR} by their genetic code, which is a collection of subsets of $[\![n]\!]$ determined by $\ell$, which we will define at the beginning of Section \ref{sec2}. The manifolds $N(\ell)$ and $N(\ell')$ are diffeomorphic if and only if they have the same genetic code. All genetic codes for $n\le9$ are listed in \cite{web}. For example, there are 134 diffeomorphism classes of nonempty spatial 7-gon spaces.

Hausmann and Knutson determined the integral cohomology ring $H^*(N(\ell))$ in \cite{HK}. We give our interpretation of this result, in terms of the genetic code, in Theorem \ref{cohthm}. Since $H^*(N(\ell))$ is generated by elements of $H^2(N(\ell))$ and is torsion-free, the Chern character effectively determines the complex $K$-theory algebra $K(N(\ell))$. However, a general statement of this seems too complicated to be useful. In Theorems \ref{ring}, \ref{Knk1}, and \ref{few}, we will give explicit results for the ring $K(N(\ell))$ when the genetic code of $\ell$ is $\{\{n,k\}\}$ or $\{\{n,k,1\}\}$, and in general modulo a certain ideal.

Our main goal is to obtain nonimmersion results for spatial polygon spaces in Euclidean space. We use an old result, which we state later as Theorem \ref{Bak}, which tells how $K$-theoretic Chern classes can yield lower bounds for the geometric dimension of stable vector bundles.
This result is applied to the stable normal bundle of $N(\ell)$, using a result of \cite{HK} about the cohomology Chern classes of the tangent bundle of $N(\ell)$.
We will obtain the following three nonimmersion results. Throughout the paper we let $m=n-3$, so that $N(\ell)$ is a $2m$-manifold. Also, $\a(m)$ denotes the number of 1's in the binary expansion of $m$.
\begin{thm}\label{nonimm1} If the genetic code of $\ell$ is $\{\{n,k\}\}$ with $k<n$, the $2m$-manifold $N(\ell)$ cannot be immersed in $\R^{4m-2\a(m)-1}$.\end{thm}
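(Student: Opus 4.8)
The plan is to suppose that $N(\ell)$ immerses in $\R^{4m-2\a(m)-1}$ and derive a contradiction. Such an immersion equips the stable normal bundle $\nu$ of $N(\ell)$ with a geometric representative of dimension $(4m-2\a(m)-1)-2m=2m-2\a(m)-1$, so $\gd(\nu)\le 2m-2\a(m)-1$. The first step is to identify the class of $\nu$ in $K(N(\ell))$. Since $H^*(N(\ell);\Z)$ is torsion-free and concentrated in even degrees (Theorem \ref{cohthm}), the Chern character embeds $K(N(\ell))$ into $H^*(N(\ell);\Q)$, so it is enough to know the Chern character of $\nu=-[TN(\ell)]$; and by \cite{HK} the total Chern class of $TN(\ell)$ is an explicit product of linear factors in the degree-$2$ generators. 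For genetic code $\{\{n,k\}\}$ the relations of Theorem \ref{cohthm} collapse this to a manageable expression (with only a controlled contribution from the classes $V_i$), after which Theorem \ref{ring} lets me write $\nu$ explicitly in terms of the $K$-theory generator.

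Next I would invoke the $K$-theoretic geometric-dimension obstruction, Theorem \ref{Bak}: from $\gd(\nu)\le 2m-2\a(m)-1$ it follows that a particular $K$-theory Chern class $c$ of $\nu$ --- the top such class surviving on the $2m$-manifold $N(\ell)$ --- must be divisible by $2^{\a(m)+1}$. The crux of the argument is then to compute $\ord_2(c)$ and show that it equals exactly $\a(m)$, contradicting this divisibility. Using the explicit formula for $\nu$ from the first step, $c$ expands as a $\Z$-combination of binomial coefficients; discarding the summands of strictly larger $2$-adic valuation leaves a leading term which is, up to a unit, of the form $\binom{2m}{m}$, whose $2$-adic valuation is $\a(m)$ by Kummer's theorem (it counts the carries when $m$ is added to itself in base $2$). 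This is the source of the exponent $\a(m)$ in the statement, doubled to $2\a(m)$ by the passage between real and complex dimension built into Theorem \ref{Bak}.

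The step I expect to be the main obstacle is this last $2$-adic estimate: one must verify that none of the other terms in the expansion of $c$ lowers the valuation of the $\binom{2m}{m}$ term, and one must account separately for the contributions of the $V_i$, which depend on the parity of $k$ and on the size of $k$ relative to $n$, so a short case analysis on $k$ is likely to be unavoidable. Once $\ord_2(c)=\a(m)$ is established, it contradicts $2^{\a(m)+1}\mid c$ and the theorem follows. The same template --- Theorems \ref{cohthm}, \ref{ring}, and \ref{Bak} together with a binomial-coefficient estimate --- will underlie the remaining two nonimmersion results, with the combinatorics adjusted to the more involved genetic codes.
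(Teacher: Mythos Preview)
Your overall strategy is the paper's: compute $\Gamma(\eta)$ and exhibit a term whose $2$-denominator exceeds what Theorem~\ref{Bak} allows. But you are anticipating a difficulty that the paper sidesteps entirely, and in doing so you are aiming at the wrong degree.

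The point is \emph{which} component of $\Gamma(\eta)$ to inspect. You propose to look at the ``top class surviving on the $2m$-manifold,'' i.e.\ degree $m$, and you correctly foresee that the $V_i$-contributions will then interfere and force a case analysis on $k$: indeed $\b^m=(-1)^m(k-1)\a_i^m$ in $K(N_{n,k})$, so $\b^m$ is \emph{not} independent of the $\a_i$-terms. The paper instead works one step below, at degree $m-1$. By Proposition~\ref{ind}, $\b^{m-1}$ is an honest basis element, independent of every $\a_i^j$. Using the relation $\a_i\b=-\a_i^2/(1+\a_i)$ one gets the clean factorization
\[
\Gamma(\eta)=\prod_{i=1}^k(1+\a_i)^{-1}\cdot(1+\tfrac12\b)^{-(m+1)}=(1+\tfrac12\b)^{-(m+1)}+\sum_{i,j}c_{i,j}\a_i^j,
\]
so the coefficient of $\b^{m-1}$ is exactly $2^{-(m-1)}\binom{-m-1}{m-1}$, with \emph{no} contamination from the $\a_i$ and \emph{no} dependence on $k$ whatsoever. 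Since $\binom{-m-1}{m-1}=\pm\tfrac12\binom{2m}{m}$ has $2$-exponent $\a(m)-1$, one concludes immediately that $2^{m-\a(m)-1}\Gamma(\eta)\notin K(N_{n,k})$.

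So the ``main obstacle'' you flag is real at degree $m$ but evaporates at degree $m-1$. This same trick---use the largest power of $\b$ that is still independent in the $K$-theory basis---is what drives Theorem~\ref{nonimm3} as well; it is only for Theorem~\ref{nonimm2} (genetic code $\{\{n,k,1\}\}$) that $\b^{m-1}$ ceases to be independent and one must genuinely confront the $\a_i$ interference, and there a parity-of-$k$ case split does indeed appear.
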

This is the same as the standard result that can be proved for $C P^m$ using complex $K$-theory. For $\a(m)\le 8$, these results for $C P^m$ are very close to optimal. (See \cite{DCP}.)
Note that $CP^m$ is a spatial polygon space with genetic code $\{\{n\}\}$.(\cite[Expl 2.6]{H})

\begin{thm}\label{nonimm2} If the genetic code of $\ell$ is $\{\{n,k,1\}\}$ with $1<k<n$, the $2m$-manifold $N(\ell)$ cannot be immersed in $\R^{4m-2\a(m)-3}$. If $k$ is odd, it cannot be immersed in $\R^{4m-2\a(m)-1}$.\end{thm}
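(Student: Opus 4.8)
The plan is to follow the blueprint of Theorem \ref{nonimm1}: produce a $K$-theoretic obstruction to the stable normal bundle of $N(\ell)$ having small geometric dimension, and then evaluate it in the explicit ring $K(N(\ell))$ of Theorem \ref{Knk1}.

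First I would express the complexified stable normal bundle as an element of $K(N(\ell))$. Since $N(\ell)$ is a nonsingular complex projective variety, its tangent bundle $\tau=TN(\ell)$ is a complex $m$-plane bundle; its total Chern class in $H^*(N(\ell);\Z)$ --- a torsion-free ring generated in degree $2$, by Theorem \ref{cohthm} --- is given by \cite{HK}, and since the Chern character is therefore injective this determines both $[\tau]$ and its conjugate $[\bar\tau]$ in $K(N(\ell))$. Writing $\nu=-\tau_{\R}\in\kot(N(\ell))$ for the real stable normal bundle, its complexification is $\nu_{\C}=-[\tau]-[\bar\tau]\in K(N(\ell))$, which I would rewrite in terms of the algebra generators of $K(N(\ell))$ supplied by Theorem \ref{Knk1} for genetic code $\{\{n,k,1\}\}$.

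Next I would invoke Theorem \ref{Bak}: if $N(\ell)$ immerses in $\R^{2m+c}$ then $\gd(\nu)\le c$, and Theorem \ref{Bak} then forces $\gamma^{\,i}(\nu_{\C})=0$ in $K(N(\ell))$ for every $i>c$. So it suffices to prove that $\gamma^{\,2m-2\a(m)-2}(\nu_{\C})\ne 0$ for all admissible $k$, and that $\gamma^{\,2m-2\a(m)}(\nu_{\C})\ne 0$ when $k$ is odd. The model is the case of genetic code $\{\{n\}\}$ (that is, $CP^m$), where the top nonvanishing such class lies in degree $2m-2\a(m)$ and the obstruction to pushing it higher is the $2$-adic valuation of certain binomial coefficients; the new feature here is the extra ring generator and relation contributed by the ``$1$'' in the code $\{\{n,k,1\}\}$. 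I expect this relation to force the top nonvanishing $\gamma$-class down by two steps for general $k$, while for odd $k$ the term that would otherwise obstruct lies in the ideal of relations, so the $CP^m$-strength bound is recovered.

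The remaining work is arithmetic: the nonvanishing assertions reduce to showing that explicit integer combinations of binomial coefficients --- assembled from the Chern class formula of \cite{HK} and the structure constants of $K(N(\ell))$ from Theorem \ref{Knk1} for the code $\{\{n,k,1\}\}$ --- have $2$-adic valuation exactly $\a(m)$, equivalently that they generate the relevant cyclic summand of $K(N(\ell))$ in that degree. I expect this to be the main obstacle: one must control the combinatorics of the genetic code against Kummer-type valuation estimates closely enough to obtain an \emph{exact} nonvanishing rather than an approximate one, and in particular to confirm that the odd-$k$ improvement is a genuine parity cancellation --- invisible in cohomology --- rather than an artifact of a coarse bound.
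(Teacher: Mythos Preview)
Your overall plan---compute a $K$-theoretic normal-bundle invariant using Theorem~\ref{Knk1} and extract a $2$-adic obstruction---is the paper's plan too. But the specific obstruction you propose does not work.

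You invoke Theorem~\ref{Bak} as the statement ``$\gamma^{i}(\nu_{\C})=0$ for $i>c$ when $\gd(\nu)\le c$.'' That is a true statement about $\gamma$-operations, but it is vacuous here: $\gamma^{i}$ raises Atiyah--Hirzebruch filtration by $2i$, so on the $2m$-manifold $N(\ell)$ every $\gamma^{i}$ with $i>m$ is already zero. The codimensions you are after are $c=2m-2\a(m)-3$ and $c=2m-2\a(m)-1$, and for most $m$ (certainly all $2$-powers $m\ge 8$) these exceed $m$, so your proposed classes $\gamma^{2m-2\a(m)-2}(\nu_{\C})$ and $\gamma^{2m-2\a(m)}(\nu_{\C})$ vanish automatically and carry no information. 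Theorem~\ref{Bak} as stated in the paper is not a vanishing criterion but an \emph{integrality} criterion: it defines a multiplicative class $\Gamma(\theta)\in K(X)\otimes\Z[\tfrac12]$ with $\Gamma(L)=1+\tfrac12(L-1)$, and asserts that $2^{s}\Gamma(\theta)$ is integral if the underlying real bundle has dimension $\le 2s+1$. The obstruction is a nonzero denominator, not a nonzero class.

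Relatedly, you should apply $\Gamma$ to the stably-complex normal bundle $\eta=-\tau$ (which exists because $\tau$ is complex, by Corollary~\ref{tan}), not to the complexification $\nu_{\C}=-\tau-\bar\tau$; the latter would only double the relevant exponents and halve the resulting nonimmersion. With the correct setup, the first assertion of the theorem falls out of Theorem~\ref{nonimm3} with $s=2$ and $i=m-2$ or $m-3$ according to the parity of $m$. For the odd-$k$ improvement, the key point is exactly the one you guessed: the relation $\b^{m-1}=(-1)^{m-1}(k-2)\a_k^{m-1}$ makes $\b^{m-1}$ an odd multiple of the basis element $\a_k^{m-1}$ precisely when $k$ is odd, so the $2^{\a(m)-m}$ term survives. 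But to conclude you must also control the $\a_j^{m-1}$-contributions ($2\le j\le k$) coming from the $\prod(1+\a_j)^{-1}$ factor; the paper handles this with a separate combinatorial lemma (Proposition~\ref{combprop}) bounding $\nu\bigl([x^i]((1+2x)/(1+x))^{m+1}\bigr)$, together with the observation that when $k$ is odd there are an even number ($k-1$) of equal such terms, so their sum gains a factor of $2$. Your sketch does not anticipate this step, and it is where the real work lies.
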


The sets in the genetic code are called {\it genes}. In the following result, as always, $n$ is the length of $\ell$, and $m=n-3$.
\begin{thm}\label{nonimm3} Let $s+1$ be the size of the largest gene of $\ell$. Let $M=\max(i-\nu\binom{m+i}i:i\le m-s)$. Then the $2m$-manifold $N(\ell)$ cannot be immersed in $\R^{2m+2M-1}$.\end{thm}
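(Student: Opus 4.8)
The plan is to apply the general $K$-theoretic nonimmersion machinery (Theorem \ref{Bak}) to the stable normal bundle $\nu$ of $N(\ell)$, exactly as in the proofs of Theorems \ref{nonimm1} and \ref{nonimm2}, but now working only modulo the ideal in which we have computed $K(N(\ell))$ in the general case. First I would recall from \cite{HK} the cohomology Chern classes of the tangent bundle $\tau$ of $N(\ell)$: since $H^*(N(\ell))$ is generated in degree $2$, the total Chern class of $\tau$, hence of $\nu$, is a specific polynomial in the degree-$2$ generator(s). The key point, as in the $CP^m$ case, is that modulo the relevant ideal the situation looks just like $CP^m$ with a shift: if $s+1$ is the size of the largest gene, then the quotient ring $K(N(\ell))$ by the ideal studied in Theorem \ref{few} behaves, in degrees up to $2(m-s)$, like a truncated polynomial algebra, and the Chern character of $\nu$ restricted there agrees with that of (a multiple of) the canonical bundle.

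The core computation is then to evaluate the $K$-theoretic Chern class $\g^i(\nu)$ (in the sense of Theorem \ref{Bak}) for $i \le m-s$, and show that it is nonzero precisely when $i - \nu\binom{m+i}{i} \ge$ something; here $\nu$ denotes the $2$-adic valuation. The binomial coefficient $\binom{m+i}{i}$ enters because, just as for $CP^m$, the relevant coefficient appearing when one expands the Chern character of the normal bundle and converts back to a $K$-theory class is (up to a unit) $\binom{m+i}{i}$ — this is the standard fact underlying the $CP^m$ nonimmersion bound of Theorem \ref{nonimm1}. The $2$-adic valuation $\nu\binom{m+i}{i}$ measures how divisible this class is, and hence how large a trivial summand can be split off; Theorem \ref{Bak} then converts the largest nonzero such class into a lower bound $\gd(\nu) \ge 2(M + \text{(const)})$, where $M = \max(i - \nu\binom{m+i}{i} : i \le m-s)$. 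Combining $\gd(\nu) \ge 2M + c$ with the relation between geometric dimension of the normal bundle and immersion dimension ($N(\ell)$ a $2m$-manifold immerses in $\R^{2m+k}$ implies $\gd(\nu) \le k$) yields the stated nonimmersion in $\R^{2m+2M-1}$.

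The main obstacle I expect is controlling the error coming from working only modulo an ideal: Theorem \ref{Bak} requires a genuinely nonzero $K$-theoretic Chern class in $K(N(\ell))$ itself, so I must check that the class I identify in the quotient ring actually lifts to a nonzero class, i.e.\ that the ideal by which we quotiented does not already contain it. This is why the constraint $i \le m-s$ appears: the size $s+1$ of the largest gene bounds how far the quotient ring faithfully reflects the polynomial structure, so that for $i$ in this range the monomial in question survives. A secondary technical point is verifying that the relevant product of generators is nonzero in the top-dimensional part of the quotient — this is a Poincaré-duality/intersection-number statement that should follow from the cohomology ring description in Theorem \ref{cohthm} together with the explicit $K$-theory generators. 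Once these two points are in place, the numerical optimization defining $M$ and the bookkeeping in Theorem \ref{Bak} are routine.
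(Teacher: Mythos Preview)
Your approach is correct and matches the paper's proof: one applies Corollary \ref{tan}, Lemma \ref{lemrel}, and the factorization (\ref{gnor}) verbatim, observes that for $i\le m-s$ the class $\b^i$ is an honest $\Z$-basis element of $K(N(\ell))$ itself (not merely of the quotient in Theorem \ref{few}) independent of all $\a$-involving terms, so the coefficient $2^{-i}\tbinom{-m-1}{i}=\pm2^{-i}\tbinom{m+i}{i}$ of $\b^i$ in $\G(\eta)$ shows $2^{M-1}\G(\eta)\notin K(N(\ell))$, and Theorem \ref{Bak} finishes. Your ``main obstacle'' about lifting from the quotient and the secondary Poincar\'e-duality point are therefore non-issues: the Atiyah-Hirzebruch filtration argument (as in Proposition \ref{ind}) already places $\b^i$ in a $\Z$-basis of $K(N(\ell))$ for $i\le m-s$, so there is nothing to lift.
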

\noindent Here and throughout, $\nu\binom nk$ denotes the exponent of 2 in the binomial coefficient.
In Table \ref{T1}, we will tabulate some values for Theorem \ref{nonimm3}.

Existence of immersions is usually proved by obstruction theory. Recall that since $N(\ell)$ is a $2m$-manifold, it certainly immerses in $\R^{4m-1}$. In Section \ref{sec5}, we prove the following immersion result.
\begin{thm}\label{immthm} Let $n\ge5$. If the genetic code of $\ell$ is $\{\{n,k\}\}$, then $N(\ell)$ can be immersed in $\R^{4m-2}$ if and only if it is not the case that $m$ is a $2$-power and $k$ is even. If the genetic code of $\ell$ is $\{\{n,k,1\}\}$, then $N(\ell)$ can always be immersed in $\R^{4m-2}$.\end{thm}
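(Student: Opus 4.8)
The plan is to combine the nonimmersion results of Theorems \ref{nonimm1} and \ref{nonimm2} with a positive obstruction-theory argument. For the ``only if'' direction when the genetic code is $\{\{n,k\}\}$: if $m$ is a $2$-power then $\a(m)=1$, so Theorem \ref{nonimm1} already rules out immersion in $\R^{4m-2\a(m)-1}=\R^{4m-3}$, but that is one dimension short of $\R^{4m-2}$; the extra needed nonimmersion in $\R^{4m-2}$ when additionally $k$ is even must come from a sharper argument, presumably a secondary obstruction or a mod-2 cohomology/$K$-theory calculation specific to the case $\a(m)=1$. When $k$ is odd, or $\a(m)\ge2$, we expect an actual immersion to exist, and likewise for all cases of genetic code $\{\{n,k,1\}\}$ (note Theorem \ref{nonimm2} for $k$ odd only gives nonimmersion in $\R^{4m-2\a(m)-1}$, which is $\le\R^{4m-3}$ and hence consistent with immersing in $\R^{4m-2}$).

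For the existence (``if'') direction, I would set up the standard obstruction theory for immersions: by Hirsch's theorem, $N(\ell)^{2m}$ immerses in $\R^{4m-2}$ iff the stable normal bundle $\nu$ has geometric dimension $\le 2m-2$, i.e. iff the classifying map $N(\ell)\to BO$ of $\nu$ lifts to $BO(2m-2)$. The obstructions to such a lift lie in $H^{i+1}(N(\ell);\pi_i(V_{?}))$ for $i$ near $2m-2$; since $N(\ell)$ is a $2m$-manifold, only the top one or two obstruction groups are relevant, namely in $H^{2m-1}$ and $H^{2m}$. The first would-be obstruction is the top Stiefel--Whitney-type class and is handled because $N(\ell)$ is orientable and one can appeal to known vanishing of $\wbar_{2m-1}$; the genuinely delicate obstruction is in $H^{2m}(N(\ell);\Z/2)\cong\Z/2$ (by Poincar\'e duality, since $N(\ell)$ is connected), and its value is essentially a characteristic number. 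Using the formula of \cite{HK} for the Chern classes of the tangent bundle together with the ring structure of $H^*(N(\ell))$ from Theorem \ref{cohthm}, I would compute this $\Z/2$-valued obstruction explicitly in terms of $m$ and $k$ and show it vanishes precisely in the stated cases. The parity of $k$ enters through the cohomology ring relations coming from the gene $\{n,k\}$, and the condition ``$m$ a $2$-power'' enters through a binomial-coefficient $\nu$-calculation of the form $\nu\binom{2m}{m}$ or similar appearing in the characteristic number.

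The main obstacle will be the top obstruction computation: identifying the precise element of $H^{2m}(N(\ell);\Z/2)$ that obstructs the final lift, and evaluating it. This requires (i) knowing the relevant homotopy group of the fiber $V_{4m-2,\,2m}$-type space — here $\pi_{2m-1}$ of the fiber of $BO(2m-2)\to BO$, which by Bott-type calculations is $\Z/2$ in the pertinent range — and (ii) expressing the obstruction cocycle via the dual Stiefel--Whitney class $\wbar_{2m-1}$ together with a secondary correction term, then pairing with the fundamental class. A subtlety is that when $\a(m)\ge 2$ the primary obstruction $\wbar_{2m-1}$ may already vanish for dimension-counting reasons in $K$-theory (cf.\ the proof of Theorem \ref{nonimm1}), so the lift exists with no further work, whereas when $m$ is a $2$-power one must push to the secondary stage and the even/odd $k$ dichotomy becomes visible. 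For the $\{\{n,k,1\}\}$ case the extra gene $\{1\}$ modifies the cohomology ring just enough (an extra relation killing the relevant product) that the top obstruction always vanishes, which is why immersion in $\R^{4m-2}$ always holds there; I would verify this directly from Theorem \ref{cohthm}. Throughout, the low-dimensional cases $n=5,6$ should be checked by hand against \cite{web} to confirm the general argument.
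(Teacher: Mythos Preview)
Your broad strategy---obstruction theory for the lift $N(\ell)\to BSO(2m-2)$, with the decisive obstruction living in top cohomology and evaluated via the Chern-class formula of Proposition~\ref{HKprop}---matches the paper's. But there is a genuine gap: you misidentify the coefficient group. The fiber $V_{2m-2}$ of $BSO(2m-2)\to BSO$ has $\pi_{2m-1}\cong\Z_4$, not $\Z/2$, and the paper (via the Li--Peterson criterion, stated as Theorem~\ref{LPthm}) shows that for even $m$ the obstruction is $\rho_4(c_m(\eta))\in H^{2m}(N(\ell);\Z_4)$, with indeterminacy the image of $\sq^2+w_2(\eta)\cdot{}$ under $i_*\colon H^{2m}(-;\Z_2)\to H^{2m}(-;\Z_4)$. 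This $\Z_4$ structure is exactly what makes the dichotomy visible: one computes $c_m(\eta)\equiv\binom{-(m+1)}{m}R^m\pmod 4$, which is $2R^m$ when $m$ is a $2$-power and $0$ otherwise; then in $H^{2m}(N_{n,k};\Z_4)$ one has $2R^m\ne 0$ iff $k$ is even (from $R^m=(-1)^m(k-1)V_i^m$), while in $H^{2m}(N_{n,k,1};\Z_4)$ one has $R^m=0$ outright. If you work only mod $2$, the class $2R^m$ is invisible and you cannot separate the cases.

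Two further corrections. First, the nonimmersion when $m$ is a $2$-power and $k$ is even is \emph{not} obtained by sharpening Theorem~\ref{nonimm1}; it comes from the same obstruction-theory computation, by checking that the $\sq^2+w_2$ indeterminacy vanishes (the paper does this explicitly on the basis $\{R^{m-1},V_i^{m-1}\}$). Second, your remark that ``when $\a(m)\ge2$ the primary obstruction may already vanish for dimension-counting reasons in $K$-theory'' is off the mark: $K$-theory plays no role here, and the vanishing for $\a(m)\ge2$ is simply $\nu\binom{2m}{m}=\a(m)\ge2$, so $\binom{-(m+1)}{m}R^m\equiv0\pmod4$. The odd-$m$ case is easier (criterion is $\rho_2(c_m(\eta))=0$) and your outline would handle it.
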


\begin{rmk} {\bf Optimal immersions}: {\rm Combining Theorems \ref{nonimm1}, \ref{nonimm2}, and \ref{immthm}, we find that when $n-3$ is a 2-power, immersions of the $2m$-manifold $N_{n,k}$ in $\R^{4m-1}$ when $k$ is even, and of both $N_{n,k}$ and $N_{n,k,1}$ in $\R^{4m-2}$ when $k$ is odd are optimal.}\end{rmk}

We thank Martin Bendersky and Jean-Claude Hausmann for helpful suggestions.

\section{General results and proof of Theorem \ref{nonimm1}}\label{sec2}
In this section, we provide the background for all our proofs, and apply them to prove Theorem \ref{nonimm1}.

We assume without loss of generality that $\ell_1\le\cdots\le\ell_n$.
 For a length vector $\ell$, a subset  $S\subseteq[\![n]\!]$ is called {\it short} if $\ds \sum_{i\in S}\ell_i<\ds\sum_{i\not\in S}\ell_i$.  A partial order is defined on  the power set of $[\![n]\!]$, based on inclusions of sets and sizes of numbers.(\cite[\S4]{HR}) The {\it genetic code} of $\ell$ is defined to be the set of maximal elements under this ordering in the set of all short subsets containing $n$. For example, the genetic code of $\ell=(\underbrace{1,\ldots,1}_k,\underbrace{2,\ldots,2}_{n-k-1},2n-k-5)$ is $\{\{n,k\}\}$, and the genetic code of $\ell=(\frac12,\underbrace{1,\ldots,1}_{k-1},\underbrace{2,\ldots,2}_{n-k-1},2n-k-6)$ is $\{\{n,k,1\}\}$.

In \cite{D3}, we defined a {\it gee} to be a gene without listing the $n$, and an element $\le$ a gee is called a {\it subgee}. So the subgees are just the subsets $T$ of $[\![n-1]\!]$ such that $T\cup\{n\}$ is short. Our interpretation in \cite{D3} of the theorem of \cite{HK} is as follows.
\begin{thm}\label{cohthm} The integral cohomology ring $H^*(N(\ell))$ is generated by degree-$2$ elements $R$ and $V_i$, $1\le i<n$, with relations
\begin{itemize}
\item[a.] For a subset $S\subseteq[\![n-1]\!]$, let $V_S=\ds\prod_{i\in S}V_i$. Then $V_S=0$ if $S$ is not a subgee.
    \item[b.] For all $i$, $RV_i+V_i^2=0$.
\item[c.] For each subgee $T$ with $|T|\ge n-2-d$, there is a relation $\mathcal R_T$ in $H^{2d}(N(\ell))$:
$$ \sum_{S\notint T} R^{d-|S|}V_S=0.$$
\end{itemize}
\end{thm}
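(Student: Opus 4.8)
The plan is to obtain Theorem \ref{cohthm} from the presentation of $H^*(N(\ell);\Z)$ given by Hausmann and Knutson in \cite{HK}, by translating their ``long/short subset'' bookkeeping into the genetic-code language of \cite{HR} and \cite{D3}. In \cite{HK} the ring $H^*(N(\ell);\Z)$ is presented as a quotient of the polynomial ring $\Z[R,V_1,\dots,V_{n-1}]$, with all generators in degree $2$, by an ideal generated by relations of three types: a quadratic family $V_i^2+RV_i$ $(1\le i\le n-1)$; a monomial family of products $\prod_{i\in L}V_i$; and a third family, each member a $\Z[R]$-linear combination of monomials $V_S$, indexed by long subsets. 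The first two are immediately (b) and (a): (b) is HK's quadratic relation (up to the sign convention on $R$), and, after re-indexing HK's ``long'' condition via the dictionary below, the monomial relations are exactly the relations $V_S=0$ for $S$ not a subgee --- recall that, by definition, $T\subseteq[\![n-1]\!]$ is a subgee precisely when $T\cup\{n\}$ is short, and genericity makes ``not short'' synonymous with ``long''.

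The content is matching the third HK family with the relations $\mathcal R_T$ of (c). First I would record the elementary dictionary: with the normalization $\ell_1\le\cdots\le\ell_n$ of \S\ref{sec2}, a subset $A\subseteq[\![n]\!]$ is long iff its complement is short, so $T\subseteq[\![n-1]\!]$ is a subgee iff $[\![n-1]\!]\setminus T$ is long as a subset of $[\![n]\!]$; thus the long subsets avoiding $n$ are exactly the complements of subgees. Next, for a subgee $T$, I would identify $\mathcal R_T$ with the HK relation attached to the long set $L=[\![n-1]\!]\setminus T$: the condition $S\cap T=\emptyset$ appearing in $\mathcal R_T$ is exactly $S\subseteq L$, so the two relations agree up to a power of $R$, and the hypothesis $|T|\ge n-2-d$ records which degree $\mathcal R_T$ occupies relative to the natural HK relation. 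To reconcile these degrees I would use (b) in the normal form $R^aV_S=(-1)^aV_i^{\,a}V_S$ for $i\in S$ (iterate $RV_i=-V_i^2$), together with the fact, established in \cite{HK}, that $H^*(N(\ell))$ is torsion-free and concentrated in even degrees, to pass between $\mathcal R_T$ in its lowest degree $2(n-2-|T|)$ and its $R$-multiples. This would show that, modulo (a) and (b), the family $\{\mathcal R_T\}$ and HK's third family generate the same ideal. As a bonus worth recording, multiplying $\mathcal R_T$ by a suitable $V_i$ and simplifying with (a) and (b) produces the relations for smaller subgees, so it in fact suffices to impose $\mathcal R_T$ for $T$ a maximal subgee, one per gene.

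The main obstacle is precisely this reconciliation: HK's relations and the $\mathcal R_T$ are naturally stated in different cohomological degrees and over combinatorially matched but not literally equal index sets, so the proof reduces to the elementary but fiddly algebra of checking that the two generating sets span the same ideal of $\Z[R,V_1,\dots,V_{n-1}]$ after the substitutions dictated by (a) and (b). The one genuinely non-formal ingredient is the ``divide by $R$'' step; I would justify it either directly from torsion-freeness of $H^*(N(\ell))$, or, more robustly, by exhibiting an explicit additive monomial basis of the presented ring --- spanned by the classes $R^j$ and $R^jV_T$ with $T$ a nonempty subgee and $j$ in the evident range --- and verifying degree by degree that its rank equals the Betti number of $N(\ell)$ from \cite{HK}, whence the canonical surjection from the presented ring onto $H^*(N(\ell))$ must be an isomorphism.
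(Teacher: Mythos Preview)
The paper does not give a proof of this theorem: it is stated as the author's reformulation, first given in \cite{D3}, of the Hausmann--Knutson presentation of $H^*(N(\ell))$ in \cite{HK}. Your proposal---to translate HK's long/short bookkeeping into the genetic-code language via the dictionary ``$T$ is a subgee $\Longleftrightarrow$ $[\![n-1]\!]\setminus T$ is long''---is exactly the intended derivation, and your identification of relations (a), (b), (c) with HK's monomial, quadratic, and third families is correct. The one point you flag as delicate, passing between $\mathcal R_T$ in its minimal degree and its $R$-multiples, is handled cleanly by your second suggested route (exhibit the monomial basis and match Betti numbers); the ``divide by $R$'' justification via torsion-freeness is less clean since $R$ is a zero-divisor, so I would drop that option.
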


The following useful result is quickly deduced from \cite{HK}.
\begin{prop} The total Chern class of the tangent bundle $\tau(N(\ell))$ satisfies
$$(1+R)c(\tau(N(\ell)))=\prod_{i=1}^{n-1}(1+2V_i+R).$$\label{HKprop}
\end{prop}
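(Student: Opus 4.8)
The plan is to deduce this from the description in \cite{HK} of $N(\ell)$ as the symplectic (or GIT) quotient of a product of two-spheres, together with the identification of the cohomology classes $R$ and $V_i$. Recall that $N(\ell)$ arises as a reduced space, and over it there are natural complex line bundles: for each $i$, a line bundle $L_i$ with $c_1(L_i)=V_i$ (coming from the $i$-th sphere factor), and a line bundle with $c_1=R$ coming from the ``diagonal'' circle action that is being quotiented. First I would recall (this is essentially in \cite{HK}; see also \cite{HR}) that the tangent bundle of $N(\ell)$ fits into a short exact sequence expressing $\tau(N(\ell))$ in terms of the tangent bundles along the sphere factors modulo the tangent along the $SO(3)$-orbit. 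Concretely, $T(S^2)$ for the $i$-th factor, as a complex line bundle, has first Chern class $2V_i$ (its Euler class is twice the generator, and one must be careful about the twisting so that it comes out $2V_i$ rather than $2V_i + (\text{something in }R)$; this is where the class $R$ enters via the moment-map level-set reduction).

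Next I would assemble the total Chern class. From the fibration/quotient description one gets a stable isomorphism of the form
$$\tau(N(\ell)) \oplus (\text{trivial}) \oplus \lambda \;\cong\; \bigoplus_{i=1}^{n-1}\mu_i,$$
where $\lambda$ is a line bundle with $c_1(\lambda)=R$ (accounting for the orbit directions / the $S^1$ that is divided out, after using the constraint to eliminate one sphere factor, say the $n$-th), and each $\mu_i$ is a complex line bundle with $c_1(\mu_i)=2V_i+R$. Taking total Chern classes and using their multiplicativity, one obtains
$$c(\tau(N(\ell)))\cdot(1+R)=\prod_{i=1}^{n-1}(1+2V_i+R),$$
which is exactly the claimed identity. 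So the computation reduces to correctly bookkeeping which bundles appear and what their first Chern classes are.

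The main obstacle — and the only real content — is pinning down the ``$+R$'' twist in each factor $1+2V_i+R$ and the single leftover factor $1+R$ on the left-hand side. This requires tracking the $S^1$-equivariant structure in the symplectic reduction of $(S^2)^n$ (equivalently, the line bundle coming from the principal bundle $N(\ell)$ sits inside), and checking that the Euler class of the vertical tangent bundle of the $i$-th sphere factor, pushed to $N(\ell)$, equals $2V_i+R$ rather than $2V_i$. Once the relation $RV_i+V_i^2=0$ of Theorem \ref{cohthm}(b) is available, there is some flexibility in how one writes these classes, but the cleanest route is simply to cite the explicit description of $\tau(N(\ell))$ given in \cite{HK} and read off Chern classes; I would present the argument in that form, with the verification of the line-bundle identifications as the substantive step. \qed
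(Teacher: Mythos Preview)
Your outline is in the right spirit---the result is indeed read off from \cite{HK}---but the paper's route is more concrete and sidesteps exactly the bookkeeping you flag as the ``main obstacle.'' Rather than assembling a stable splitting directly on $N(\ell)$, the paper quotes \cite[Remark~7.5c]{HK}, which works via the \emph{upper path space} $UP$: there one has
\[
c(\tau(UP))=(1+R)\prod_{i=1}^{n-1}(1+V_i+R)(1+V_i)
\quad\text{and}\quad
(1+R)^2\,c(\tau(N(\ell)))=c(\tau(UP)).
\]
The proposition then drops out in one line from the relation $V_i^2+RV_i=0$ of Theorem~\ref{cohthm}(b), since $(1+V_i+R)(1+V_i)=1+2V_i+R$. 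Note in particular that what \cite{HK} actually hands you are pairs of line bundles with first Chern classes $V_i+R$ and $V_i$, not a single $\mu_i$ with $c_1(\mu_i)=2V_i+R$; so your posited decomposition is not literally what one finds there, and the ``$+R$'' twist you worry about is resolved by the upper-path-space formulation together with relation~(b), rather than by a direct equivariant-Euler-class computation.
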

\begin{proof} Hausmann and Knudson utilize a space $UP$ called the upper path space, and in \cite[Remark 7.5c]{HK} note that
$$c(\tau(UP))=(1+R)\prod_{i=1}^{n-1}(1+V_i+R)(1+V_i) \text{ and } (1+R)^2c(\tau(N(\ell)))=c(\tau(UP)).$$
The result is immediate from this and the relation $V_i^2+RV_i=0$.\end{proof}
\noindent Note that $V_i=0$ unless $\{i\}$ is a subgee of $\ell$, which usually allows for cancellation of a factor $(1+R)$.

\begin{cor} There are complex line bundles $L_R$ and $L_i$, $1\le i\le n-1$, over $N(\ell)$ such that $c_1(L_R)=R$, $c_1(L_i)=V_i$, and the complex bundles $L_R\oplus\tau(N(\ell))$ and $\ds\bigoplus_{i=1}^{n-1}L_i^2L_R$ are stably isomorphic.\label{tan}\end{cor}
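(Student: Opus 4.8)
The strategy is to promote the cohomological identity of Proposition~\ref{HKprop} to a statement about actual bundles by exhibiting the line bundles as pullbacks of canonical bundles, and then comparing total Chern classes over the torsion-free ring $H^*(N(\ell))$. First I would produce the line bundles: since $H^2(N(\ell);\Z)$ classifies complex line bundles over $N(\ell)$ (the first Chern class gives an isomorphism $\mathrm{Pic}(X)\cong H^2(X;\Z)$ for any space), there exist line bundles $L_R$ and $L_i$ with $c_1(L_R)=R$ and $c_1(L_i)=V_i$; this is where I should be a little careful, because the $V_i$ and $R$ are specific generators of $H^2$ coming from the presentation in Theorem~\ref{cohthm}, and the cleanest route is to note they arise geometrically in \cite{HK} as Chern classes of tautological line bundles, so the bundles $L_R$, $L_i$ are already available there rather than merely being conjured from the classification theorem.

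Next I would compute Chern classes of the two candidate bundles. For $L_R\oplus\tau(N(\ell))$ we have $c(L_R\oplus\tau(N(\ell)))=(1+R)c(\tau(N(\ell)))$, which by Proposition~\ref{HKprop} equals $\prod_{i=1}^{n-1}(1+2V_i+R)$. On the other side, $c(L_i^2L_R)=1+2c_1(L_i)+c_1(L_R)=1+2V_i+R$ since tensoring line bundles adds first Chern classes, and hence $c\bigl(\bigoplus_{i=1}^{n-1}L_i^2L_R\bigr)=\prod_{i=1}^{n-1}(1+2V_i+R)$ as well. So the two bundles have equal total Chern classes. Both bundles have complex rank $n-2=m+1$, which exceeds $\dim_\R N(\ell)/2 - 1$... more precisely $2(m+1)>2m$, so the rank is above the dimension range where Chern classes fail to be a complete stable invariant: on a $2m$-manifold, a complex vector bundle of rank $\geq m+1$ is stably determined by its Chern classes provided $H^*$ is torsion-free, because the obstruction to a stable isomorphism lies in $\widetilde{K}(N(\ell))$, and for torsion-free $H^{even}$ the Chern character $\widetilde{K}(N(\ell))\otimes\Q\to H^{even}(N(\ell);\Q)$ is injective with the integral statement following since $K$ is torsion-free here too. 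Therefore equal total Chern classes force a stable isomorphism.

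The main obstacle I anticipate is the final step: justifying that equality of total Chern classes implies stable isomorphism. This is \emph{not} true in general, but it holds here because $H^*(N(\ell);\Z)$ is torsion-free and concentrated in even degrees (Theorem~\ref{cohthm} and the remarks preceding it), so $\widetilde{K}(N(\ell))$ is torsion-free and the Chern character is injective; one then observes that the Chern classes determine the Chern character (via Newton's identities) and hence the $K$-theory class. I would spell this out by invoking the standard fact that for such spaces two bundles are stably equivalent iff they have the same Chern character iff they have the same Chern classes. Everything else is formal manipulation with the multiplicativity of total Chern classes and the additivity of $c_1$ under tensor product of line bundles, using the relation $V_i^2+RV_i=0$ only insofar as it was already used in proving Proposition~\ref{HKprop}.
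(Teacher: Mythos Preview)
Your proposal is correct and follows essentially the same route as the paper: produce the line bundles from the bijection $\mathrm{Pic}(X)\cong H^2(X;\Z)$, use Proposition~\ref{HKprop} and additivity of $c_1$ under tensor to match total Chern classes, pass to the Chern character, and invoke torsion-freeness of $K(N(\ell))$ (from even-degree torsion-free cohomology) to conclude stable isomorphism. The only superfluous step is your discussion of ranks: since the claim is about \emph{stable} isomorphism, the obstruction lives in $\widetilde{K}(N(\ell))$ regardless of the ranks involved, so that paragraph can be dropped.
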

\begin{proof} Since $c_1$ defines a bijection of complex line bundles over $X$ with $H^2(X;\Z)$ satisfying $c_1(L_1L_2)=c_1(L_1)+c_1(L_2)$, the two complex bundles in the corollary exist and have the same total Chern class by Proposition \ref{HKprop}, and hence have the same Chern character. Since $ch:K(X)\otimes\Q\to H^*(X;\Q)$ is an isomorphism, the stable bundles are equal in $K(X)$ mod torsion. Since $H^*(N(\ell))$ is confined to even dimensions, $K(N(\ell))$ has no torsion, and so the bundles are stably isomorphic.\end{proof}
\noindent If $V_i=0$, then $L_i^2L_R=L_R$.

Now we apply these results to $N_{n,k}$,  which is defined to be $N(\ell)$ when the genetic code of $\ell$ is $\{\{n,k\}\}$. First we have the result for integral cohomology.
\begin{prop}\label{cohnk} There is a ring isomorphism

\begin{eqnarray}\label{coh}H^*(N_{n,k})&=&\Z[R,V_1,\ldots,V_k]/(V_iV_j=0 \text{ if }i\ne j,\ V_i^2=-RV_i,\\
&&\nonumber  V_1^m=\cdots=V_k^m, R^m=(-1)^m(k-1)V_i^m, R^{m+1}=0=V_i^{m+1}),\end{eqnarray}
with $|R|=|V_i|=2$.
Bases for the nonzero groups $\widetilde{H}^{2j}(N_{n,k})$ are given by $\{R^j,V_1^j,\ldots,V_k^j\}$, $1\le j<m$,  and $\{V_1^m\}$.\end{prop}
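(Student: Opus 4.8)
The plan is to specialize Theorem \ref{cohthm} to the genetic code $\{\{n,k\}\}$ and simplify. First I would record which subsets are subgees: since the single gene is $\{n,k\}$, its associated gee is $\{1,\dots,k-1\}$, so a subset $T\subseteq[\![n-1]\!]$ is a subgee precisely when $T\subseteq\{1,\dots,k-1\}$ or (using that the partial order also compares sizes of the numbers, not just inclusion) $T$ is a single element $\{i\}$ for any $i\le k$; more carefully, one checks that the proper subgees are exactly the subsets of $\{1,\dots,k\}$ of size $\le 1$ together with the subsets of $\{1,\dots,k-1\}$ — the upshot I want is that $V_iV_j=0$ for $i\ne j$ and that $V_1,\dots,V_k$ survive but no higher products of distinct $V_i$ do. This gives relation (a) in the form $V_iV_j=0$ for $i\ne j$, and relation (b) is exactly $V_i^2=-RV_i$. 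So $H^*(N_{n,k})$ is generated by $R,V_1,\dots,V_k$ with these relations, and as a group in degree $2j$ (for $j\ge1$) it is spanned by $R^j$ and the $V_i^j$, since any monomial involving two distinct $V_i$ vanishes and $V_i^2=-RV_i$ lets us rewrite $R^aV_i^b$ as $\pm R^{a+b-1}V_i$.

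Next I would extract the remaining relations from part (c) of Theorem \ref{cohthm}. The relevant subgees $T$ with $|T|\ge n-2-d = m+1-d$ are the large ones; for the code $\{\{n,k\}\}$ with $k<n$ the only subgees of near-maximal size are the single-element sets $\{i\}$ (of size $1$), which forces $d$ to be around $m$. Taking $T=\{i\}$ and $d=m$ in $\sum_{S\not\subseteq T}R^{m-|S|}V_S=0$: the subsets $S$ not contained in $\{i\}$ and contributing nonzero $V_S$ are $S=\emptyset$ and $S=\{j\}$ for $j\ne i$ (anything larger has $V_S=0$), giving $R^m+\sum_{j\ne i}R^{m-1}V_j=0$. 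Using $R^{m-1}V_j = (-1)^{?}$-type rewriting — actually $R^{m-1}V_j$ is already a basis element in degree $2m$ — and comparing this relation for two different values of $i$ yields $R^{m-1}V_i=R^{m-1}V_j$ for all $i,j$; since $V_j^m = (-1)^{m-1}R^{m-1}V_j$ (iterating $V_j^2=-RV_j$), this is the same as $V_1^m=\cdots=V_k^m$. Substituting back, $R^m = -\sum_{j\ne i}R^{m-1}V_j = -(k-1)R^{m-1}V_i = -(k-1)(-1)^{m-1}V_i^m = (-1)^m(k-1)V_i^m$. Finally, taking $d=m+1$ (one degree above the manifold dimension $2m$) gives $R^{m+1}=0$ and hence $V_i^{m+1}=0$; and one must check $V_1^m\ne0$, which follows because $N_{n,k}$ is a closed orientable $2m$-manifold so $H^{2m}\cong\Z$ is nonzero and is generated by $V_1^m$.

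The main obstacle is the bookkeeping in the first paragraph: correctly determining the subgees of $\{\{n,k\}\}$ from the Hausmann–Rodriguez partial order (which mixes inclusion with the ordering $\ell_1\le\cdots\le\ell_n$), and being careful that the only "large" subgees feeding into relation (c) are the singletons, so that the relations genuinely collapse to the short list in \eqref{coh}. Once the subgee structure is pinned down, everything else is the routine rewriting above, plus the Poincaré-duality remark to see $V_1^m\neq 0$ and to confirm the group in degree $2m$ is exactly $\Z\{V_1^m\}$ rather than larger. I would also double-check the sign $(-1)^m(k-1)$ by testing small cases (e.g. $k=2$, small $m$) against known computations, and note that when $k=1$ the relation $R^m=0$ recovers $H^*(CP^m)$, consistent with the remark that $N_{n,1}=CP^m$.
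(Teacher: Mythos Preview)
Your approach is exactly the paper's: specialize Theorem \ref{cohthm}, identify the subgees, and read off the type-(c) relations in top degree. But your subgee bookkeeping is tangled, and since you correctly flag it as ``the main obstacle,'' let me straighten it out.

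The gee is the gene with $n$ removed, so for the gene $\{n,k\}$ the gee is simply $\{k\}$, not $\{1,\dots,k-1\}$. A subgee is any set $\le\{k\}$ in the Hausmann--Rodriguez order; since $\{k\}$ has a single element, the subgees are exactly $\emptyset$ and the singletons $\{i\}$ with $i\le k$. In particular, no two-element set is a subgee, so $V_iV_j=0$ for $i\ne j$ follows immediately from relation (a). Your tentative clause ``together with the subsets of $\{1,\dots,k-1\}$'' is wrong and would contradict the very conclusion $V_iV_j=0$ you are aiming for.

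Two smaller corrections. First, the symbol in relation (c) means $S\cap T=\emptyset$ (disjoint), not $S\not\subseteq T$; with the disjointness reading, $S=\emptyset$ legitimately contributes the $R^m$ term, and your list of contributing $S$'s becomes internally consistent. Second, your closing sanity check is off: the genetic code of $CP^m$ is $\{\{n\}\}$, not $\{\{n,1\}\}$, so $N_{n,1}$ is not $CP^m$ and you should not expect its cohomology to collapse to $\Z[R]/(R^{m+1})$.

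Once the subgees are identified correctly, your derivation of $V_1^m=\cdots=V_k^m$ and $R^m=(-1)^m(k-1)V_i^m$ from the relations $R^m+\sum_{j\ne i}R^{m-1}V_j=0$ is exactly what the paper does, and the Poincar\'e-duality remark for $V_1^m\ne0$ is a fine way to confirm $H^{2m}\cong\Z$.
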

\begin{proof}
 The only nonempty subgees are $\{i\}$ with $1\le i\le k$. Relations of type c only occur in grading $m$. So, using relation b, a basis in grading less than $m$ consists of just the $j$th powers. In grading $m$, there is a relation $\mathcal R_i$ for each $i$ from 1 to $k$ of the form
$$R^m+\sum_{j\ne i}R^{m-1}V_j=0,$$
or equivalently
$$R^m+(-1)^{m-1}\sum_{j\ne i}V_j^{m}=0.$$
This system clearly reduces to the claim in (\ref{coh}).\end{proof}

Next we deduce the additive structure of $K(N_{n,k})$.
\begin{defin}\label{def} For any $\ell$, let $L_R$ and $L_i$ be the complex line bundles over $N(\ell)$ with $c_1(L_R)=R$ and $c_1(L_i)=V_i$. Let $\a_i=[L_i-1]$ and $\b=[L_R-1]\in\kt(N(\ell))$.\end{defin}
\begin{prop}  The abelian group $\kt(N_{n,k})$ is  free  with basis $$\{\a_1^j:\ 1\le j\le m\}\cup\{\a_2^j,\ldots,\a_k^j,\b^j:1\le j<m\}.$$\label{ind}\end{prop}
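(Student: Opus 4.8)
The plan is to bootstrap from the cohomology computation in Proposition~\ref{cohnk}, using the fact that $K(N_{n,k})$ is torsion-free (noted in Corollary~\ref{tan}) and that the Chern character is a rational isomorphism. Since $N_{n,k}$ is a $2m$-manifold whose cohomology is concentrated in even degrees, the rank of $\kt(N_{n,k})$ equals $\sum_{j=1}^m \dim \widetilde H^{2j}(N_{n,k}) = (m-1)(k+1) + 1$, which is exactly the number of proposed generators. So it suffices to show that the set $\{\a_1^j : 1\le j\le m\} \cup \{\a_i^j, \b^j : 2\le i\le k,\ 1\le j < m\}$ generates $\kt(N_{n,k})$, or equivalently spans a subgroup of full rank whose quotient is torsion-free.

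\emph{First} I would recall the standard relation between $c_1$ and the $K$-theory class of a line bundle: for a line bundle $L$ with $c_1(L)=x$, the Chern character is $\operatorname{ch}(L-1) = e^x - 1 = x + x^2/2 + \cdots$. Applying this to $\a_i = [L_i - 1]$ and $\b = [L_R - 1]$, and using the monomial relations in $H^*(N_{n,k})$ (namely $V_iV_j = 0$ for $i\ne j$ and $V_i^2 = -RV_i$), one computes that $\operatorname{ch}(\a_i^j)$ is, up to a unit, $V_i^j$ plus higher-order terms in $V_i$ only, and $\operatorname{ch}(\b^j)$ is $R^j$ plus higher terms in $R$. \emph{Then} the key point is that the ``leading terms'' of these Chern characters, read off in increasing degree, form a triangular system with respect to the cohomology basis $\{R^j, V_1^j,\ldots,V_k^j\}$ in degree $2j$ (for $j<m$) together with $\{V_1^m\}$ in degree $2m$. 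Triangularity with unit diagonal entries (the coefficients are powers of $1$ coming from $e^x-1$) shows the Chern characters of the proposed elements are a $\Q$-basis of $H^*(N_{n,k};\Q)$, hence the elements are linearly independent over $\Z$ and span a finite-index subgroup.

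\emph{Finally}, to upgrade ``finite index'' to ``equal,'' I would argue that the index is forced to be $1$: any element of $\kt(N_{n,k})$ has a Chern character which is an \emph{integral} combination in the sense that it lies in the image of $\operatorname{ch}$, and one checks degree by degree (using the explicit form of $\operatorname{ch}$ on our spanning set and the torsion-freeness of $K$) that it is already an integral combination of the $\a_i^j$ and $\b^j$. Concretely, filter $\kt$ by the skeletal (or Chern-character-degree) filtration; the associated graded is identified with $\widetilde H^*(N_{n,k};\Z)$, and our elements map to the stated cohomology basis in each graded piece, so they generate.

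\emph{The main obstacle} will be the bookkeeping in the triangularity step: verifying that $\operatorname{ch}(\a_i^j)$ and $\operatorname{ch}(\b^j)$ really do have the claimed leading term and that nothing degenerates --- in particular at the top degree $2m$, where only $V_1^m$ survives and the relations $V_i^m = V_1^m$, $R^m = (-1)^m(k-1)V_i^m$ must be used to see that $\a_1^m$ captures the last generator while $\a_i^m$ for $i\ge 2$ and $\b^m$ are \emph{dependent} on the lower classes (which is precisely why they are omitted from the basis). Handling this top-degree collapse carefully, rather than the generic-degree triangularity, is where the real content lies.
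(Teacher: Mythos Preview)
Your proposal is correct and follows essentially the same route as the paper: observe that $\operatorname{ch}(\a_i^j)\equiv V_i^j$ and $\operatorname{ch}(\b^j)\equiv R^j$ modulo higher degree, then use the skeletal (Atiyah--Hirzebruch) filtration, whose associated graded is $\widetilde H^*(N_{n,k};\Z)$, together with downward induction on split short exact sequences to conclude that the listed elements form a free basis. The ``main obstacle'' you flag in the final paragraph is not in fact an obstacle: since only $\a_1^m$ appears in the proposed list at filtration level $m$, and its image in $H^{2m}\approx\Z$ is the generator $V_1^m$, the filtration argument goes through uniformly with no separate top-degree analysis needed; the dependence of $\a_i^m$ ($i\ge2$) and $\b^m$ on the basis elements is a consequence of the ring structure, not an ingredient in the proof that the basis \emph{is} a basis.
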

\begin{proof} We have $ch(\a_i^j)\equiv V_i^j$ and $ch(\b^j)\equiv R^j$,  where $\equiv$ means mod terms of higher degree.
So the specified $\a_i^j$ and $\b^j$ are elements in $\kt(N_{n,k})$ on which the first component of $ch$ gives an isomorphism to a basis for $H^*(N_{n,k})$. Standard methods imply that for a space with only
even-dimensional cohomology and no torsion, this implies the claim.

Indeed, let $K_{2j}(N_{n,k})$ denote the quotient of $K(N_{n,k})$ by elements trivial on the $(2j-1)$-skeleton. A result in \cite{Ad}, along with collapsing of the Atiyah-Hirzebruch spectral sequence, implies that
$$ch_j:K_{2j}(N_{n,k})/K_{2j+2}(N_{n,k})\to H^{2j}(K_{n,k})$$ is an isomorphism. Now by downward induction the split short exact sequence
$$0\to K_{2j+2}(N_{n,k})\to K_{2j}(N_{n,k})\to H^{2j}(N_{n,k})\to 0$$
implies that each $K_{2j}(N_{n,k})$ is a free abelian group generated by powers $\ge j$.

\end{proof}

\begin{thm}\label{ring} The multiplicative relations in $K(N_{n,k})$ are $\a_i\a_j=0$ for $i\ne j$, $\a_i\b=-\a_i^2/(1+\a_i)$, $\a_1^m=\cdots=\a_k^m$, $\b^m=(-1)^m(k-1)\a_i^m$, $\b^{m+1}=0=\a_i^{m+1}$.\end{thm}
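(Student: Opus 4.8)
The plan is to deduce all the relations in $K(N_{n,k})$ from the known cohomology relations of Proposition \ref{cohnk}, the additive basis of Proposition \ref{ind}, and the fact that the Chern character is a rational isomorphism which is injective on the torsion-free group $\kt(N_{n,k})$. The general strategy is: for each claimed relation, exhibit a $K$-theory element (the difference of the two sides) whose Chern character vanishes, hence which is zero, or — when the relation cannot be checked purely rationally — pin down the remaining $\Z$-ambiguity using the additive basis together with the leading-term information $ch(\a_i^j)\equiv V_i^j$, $ch(\b^j)\equiv R^j$ modulo higher degree.

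First I would dispose of the easy multiplicative relations. For $\a_i\a_j$ with $i\ne j$: since $L_iL_j$ is a line bundle with $c_1=V_i+V_j$ and $V_iV_j=0$, a short computation of Chern characters gives $ch(\a_i\a_j)=ch(\a_i)ch(\a_j)=(e^{V_i}-1)(e^{V_j}-1)=0$ because every cross term contains the factor $V_iV_j=0$; injectivity of $ch$ then forces $\a_i\a_j=0$. For the relation $\a_i\b=-\a_i^2/(1+\a_i)$: I would interpret the right side as $-\a_i^2(1+\a_i)^{-1}=-\a_i^2(1-\a_i+\a_i^2-\cdots)$, a finite sum since $\a_i$ is nilpotent, and check equality of Chern characters using $ch(\a_i)=e^{V_i}-1$, $ch(\b)=e^R-1$, and the cohomology relation $V_i^2=-RV_i$ (equivalently $R=-V_i$ when multiplied into any positive power of $V_i$). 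Concretely, $V_i^2+RV_i=0$ means $ch(\b)ch(\a_i)=(e^R-1)(e^{V_i}-1)$ and $ch(\a_i^2)=(e^{V_i}-1)^2$; rewriting everything in terms of $V_i$ alone using $R\cdot V_i^k=-V_i^{k+1}$ reduces the identity to a formal power-series identity in the single nilpotent variable $\a_i$, namely $(L_R-1)(L_i-1)=-\,(L_i-1)^2L_i^{-1}$, which follows from $c_1(L_R)V_i=-V_i^2$, i.e.\ $L_R$ and $L_i^{-1}$ agree when restricted to the subspace detecting powers of $V_i$. Again $ch$-injectivity closes the argument.

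The powers relations are where the real content lies. For $\a_1^m=\cdots=\a_k^m$ and $\b^m=(-1)^m(k-1)\a_i^m$: by Proposition \ref{ind} the group $\kt^{}(N_{n,k})$ in ``top filtration'' is rank one, spanned by $\a_1^m$, and each of $\a_i^m$ and $\b^m$ lies in that top filtration (its Chern character starts in degree $2m$). Hence $\a_i^m=c_i\a_1^m$ and $\b^m=d\,\a_1^m$ for integers $c_i,d$. To evaluate $c_i$ and $d$ I would apply $ch_m$ (the degree-$2m$ component of the Chern character), which is an isomorphism $K_{2m}/K_{2m+2}\to H^{2m}$; there $ch_m(\a_i^m)=V_i^m$ and $ch_m(\b^m)=R^m$, and the cohomology relations $V_1^m=\cdots=V_k^m$ and $R^m=(-1)^m(k-1)V_i^m$ from Proposition \ref{cohnk} give $c_i=1$ and $d=(-1)^m(k-1)$ immediately. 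Finally $\a_i^{m+1}=0=\b^{m+1}$ because $H^{2(m+1)}(N_{n,k})=0$ and these elements lie in filtration $\ge 2(m+1)$, which is the zero group — or, more elementarily, because $\a_i^{m+1}$ and $\b^{m+1}$ have Chern character in degrees $\ge 2m+2$, all of which vanish, so the elements vanish by injectivity.

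The main obstacle, such as it is, is bookkeeping rather than conceptual: one must be careful that the relation $\a_i\b=-\a_i^2/(1+\a_i)$ is an honest identity in $K(N_{n,k})$ and not merely ``mod higher filtration,'' so the Chern-character verification has to be carried out exactly, using the single-variable reduction $R\,V_i^k=-V_i^{k+1}$ to collapse the two-variable computation to a one-variable formal identity; once that is set up, every step is forced. A secondary point to check is that these relations, together with the relations of Proposition \ref{cohnk} lifted through the associated-graded isomorphism, actually suffice to present the ring — i.e.\ that the monomials $\a_i^j$, $\b^j$ remaining after imposing the listed relations match the additive basis of Proposition \ref{ind} in each degree, so no relation has been missed. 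That is a finite count and follows the same pattern as the proof of Proposition \ref{cohnk}.
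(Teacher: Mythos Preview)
Your proposal is correct and follows essentially the same route as the paper: use injectivity of $ch$ on the torsion-free group $\kt(N_{n,k})$ to reduce each relation to a cohomology identity from Proposition \ref{cohnk}, with the one nontrivial case $\a_i\b=-\a_i^2/(1+\a_i)$ handled by the single-variable reduction $R\cdot V_i=-V_i^2$ (the paper writes this as $(e^V-1)(e^{-V}-1)=-(e^V-1)^2/e^V$). Your filtration phrasing for the top-degree relations is slightly more explicit than the paper's terse ``bases correspond,'' but the argument is the same.
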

\begin{proof} Since $ch$ is an isomorphism  $K(X)\ot\Q\to H^*(X;\Q)$ and we have seen that bases correspond, it suffices to show that $ch$ sends asserted relations to 0. Using the cohomology relations stated in (\ref{coh}), all are clear except the one for $\a_i\b$. Letting $V=V_i$ and noting that when multiplied by $V$, $R$ acts like $-V$, we have
$$ch(\a_i\b)=(e^V-1)(e^R-1)=(e^V-1)(e^{-V}-1)=-(e^V-1)\tfrac{e^V-1}{e^V}=-\tfrac{(e^V-1)^2}{1+(e^V-1)}=ch(\tfrac{-a_i^2}{1+\a_i}).$$
\end{proof}

\begin{lem} For any $i$, $L_i^2L_R-1=2\a_i+\b+\a_i\b\in\kt(N(\ell))$.\label{lemrel}\end{lem}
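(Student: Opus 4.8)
The plan is to compute directly in $\kt(N(\ell))$ using the definition $\alpha_i = [L_i - 1]$ and $\beta = [L_R - 1]$, i.e. $L_i = 1 + \alpha_i$ and $L_R = 1 + \beta$ as elements of the ring $\kt(N(\ell))$ (with the convention that a line bundle $L$ is identified with the class $[L]$, and $1$ denotes the trivial line bundle). Then $L_i^2 L_R$ corresponds to the product $(1+\alpha_i)^2(1+\beta)$, and I would simply expand this product and subtract $1$.

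Carrying this out: $(1+\alpha_i)^2(1+\beta) = (1 + 2\alpha_i + \alpha_i^2)(1+\beta) = 1 + 2\alpha_i + \alpha_i^2 + \beta + 2\alpha_i\beta + \alpha_i^2\beta$. Subtracting $1$ gives $L_i^2 L_R - 1 = 2\alpha_i + \alpha_i^2 + \beta + 2\alpha_i\beta + \alpha_i^2\beta = 2\alpha_i + \beta + \alpha_i\beta + (\alpha_i^2 + \alpha_i\beta + \alpha_i^2\beta)$. So the assertion $L_i^2L_R - 1 = 2\alpha_i + \beta + \alpha_i\beta$ is equivalent to the vanishing of the correction term $\alpha_i^2 + \alpha_i\beta + \alpha_i^2\beta = \alpha_i(\alpha_i + \beta + \alpha_i\beta) = \alpha_i(\alpha_i + \beta(1+\alpha_i))$. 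Now I invoke the relation from Theorem \ref{ring}, namely $\alpha_i\beta = -\alpha_i^2/(1+\alpha_i)$, equivalently $\alpha_i\beta(1+\alpha_i) = -\alpha_i^2$, i.e. $\alpha_i\beta + \alpha_i^2\beta = -\alpha_i^2$. Hence $\alpha_i^2 + \alpha_i\beta + \alpha_i^2\beta = \alpha_i^2 - \alpha_i^2 = 0$, which is exactly what is needed.

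One point to be careful about is that Theorem \ref{ring} is stated for $N_{n,k}$, whereas Lemma \ref{lemrel} is asserted for general $N(\ell)$, so I should not cite Theorem \ref{ring} directly. Instead I would verify the needed identity $\alpha_i\beta + \alpha_i^2\beta = -\alpha_i^2$ in $\kt(N(\ell))$ from scratch: apply the Chern character, which is injective on $\kt(N(\ell))$ since (as noted in the proof of Corollary \ref{tan}) $K(N(\ell))$ is torsion-free and $ch$ is a rational isomorphism, and use that $ch(\alpha_i) = e^{V_i}-1$, $ch(\beta) = e^R - 1$, together with the cohomology relation $RV_i = -V_i^2$ from Theorem \ref{cohthm}(b), which makes $R$ act as $-V_i$ on any multiple of $V_i$. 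Then $ch(\alpha_i\beta(1+\alpha_i)) = (e^{V_i}-1)(e^R-1)e^{V_i} = (e^{V_i}-1)(e^{-V_i}-1)e^{V_i} = -(e^{V_i}-1)^2 = ch(-\alpha_i^2)$, so by injectivity $\alpha_i\beta(1+\alpha_i) = -\alpha_i^2$.

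The argument is essentially a one-line algebraic manipulation, so there is no serious obstacle; the only thing requiring a moment's care is the justification that the relevant relation among $\alpha_i$ and $\beta$ holds in the full ring $\kt(N(\ell))$ and not merely in the special cases, which is handled by the injectivity of $ch$ on this torsion-free $K$-theory. An even quicker route, avoiding Chern characters entirely, is to observe that since $L_i L_i^{-1} = 1$ we have in $\kt(N(\ell))$ the identity $[L_i - 1] + [L_i^{-1} - 1] + [L_i - 1][L_i^{-1}-1] = [L_i L_i^{-1} - 1] = 0$, and since $c_1(L_i^{-1}) = -V_i$ while $L_R$ and $L_i^2$ have equal first Chern class $\pm$... — but this needs $L_i^{-1}$ to relate to $L_R$, which is not automatic, so I will stick with the Chern character computation above as the cleanest path.
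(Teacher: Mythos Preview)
Your proof is correct. Both your argument and the paper's rest on the same two ingredients: injectivity of $ch$ on the torsion-free group $\kt(N(\ell))$, and the cohomology relation $RV_i=-V_i^2$, which makes $R$ act as $-V_i$ on multiples of $V_i$. The organization differs, however. The paper computes $ch(L_i^2L_R-1)=\sum_{j\ge1}(2V_i+R)^j/j!$ directly, first establishing the auxiliary formula $(2V_i+R)^j=R^j$ for $j$ even and $R^j+2V_i^j$ for $j$ odd, and then manipulates this into $ch(2\a_i+\b+\a_i\b)$. You instead expand $(1+\a_i)^2(1+\b)-1$ purely algebraically in $K$-theory and reduce the lemma to the single identity $\a_i\b(1+\a_i)=-\a_i^2$, which you then check via $ch$. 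Your route avoids the separate computation of $(2V_i+R)^j$ and isolates exactly the relation $\a_i\b=-\a_i^2/(1+\a_i)$ that recurs throughout the paper; the paper's route has the advantage of being a single straight computation of $ch$ on each side without any preliminary reduction. Either way the content is the same.
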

\begin{proof} Let $V=V_i$.
Since $R$ acts as $-V$ when multiplied by $V$, we have in $H^*(N(\ell))$
\begin{equation}\label{nice}(2V+R)^j=R^j+((2V-V)^j-(-V)^j)=\begin{cases}2V^j+R^j&j\text{ odd}\\ R^j&j\text{ even.}\end{cases}\end{equation}
 We obtain
\begin{eqnarray*}ch(L_i^2L_R-1)&=&\sum_{j\ge1}\tfrac{(2V+R)^j}{j!}\\
&=&2(V+\tfrac{V^3}{3!}+\tfrac{V^5}{5!}+\cdots)+e^R-1\\
&=&(e^V-1)-(e^{-V}-1)+e^R-1\\
&=&(e^V-1)+(e^V-1)(1+e^{-V}-1)+e^R-1\\
&=&2(e^V-1)+(e^V-1)(e^{-V}-1)+(e^R-1)\\
&=&ch(2\a_i+\a_i\b+\b).\end{eqnarray*}
Hence we have equality in $K(N(\ell))$.
\end{proof}

To obtain nonimmersions, we use
the following result, which we quote from \cite[Theorem 4.4]{Baker}, although earlier versions apparently exist.
\begin{thm} \label{Bak}For complex bundles $\theta$ over finite complexes, there are natural classes $\G(\theta)\in K(X)\ot\Z[\frac12]$ satisfying $\G(\theta_1+\theta_2)=\G(\theta_1)\G(\theta_2)$ and $\G(L)=1+\frac{L-1}2$  if $L$ is a (complex) line bundle.
If $\theta$ is a stably complex vector bundle over a finite complex $X$ which, as a real bundle, is stably  isomorphic to a bundle of (real) dimension $2s+1$, then $2^s\G(\theta)\in K(X)$.\end{thm}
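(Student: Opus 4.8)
The plan is to separate the two assertions: the existence of $\Gamma$ with its stated formal properties is routine, while the integrality $2^s\Gamma(\theta)\in K(X)$ is the real content, and the natural tool for it is the complex spinor bundle of $\spin^c$ bundles.

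For the construction I would simply \emph{define}, for a complex bundle $\theta$ of complex rank $k$,
$$\Gamma(\theta)=2^{-k}\sum_{i\ge0}\lambda^i(\theta)\ \in\ K(X)\ot\Z[\tfrac12],$$
with $\lambda^i$ the exterior-power operations. This is clearly natural; since the total exterior power $\lambda_t$ is exponential, evaluating at $t=1$ and keeping track of the factor $2^{-k}$ gives $\Gamma(\theta_1\oplus\theta_2)=\Gamma(\theta_1)\Gamma(\theta_2)$, and on a line bundle $L$ one reads off $\Gamma(L)=(1+L)/2$. Because $\Gamma$ of a trivial line bundle is $1$, $\Gamma$ is a stable invariant; and since the rank of $\Gamma(\theta)$ is $2^{-k}\sum_i\binom ki=1$, each $\Gamma(\theta)$ is $1$ plus a nilpotent element of $\kt(X)\ot\Z[\tfrac12]$ (the reduced $K$-theory of a finite complex being nilpotent), hence a unit. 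So $\Gamma$ extends multiplicatively to $K(X)$ by $\Gamma([A]-[B])=\Gamma(A)\Gamma(B)^{-1}$, well defined by stability. That settles the first half.

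Now suppose $\theta$ is complex of complex rank $k$ and $\theta_\R$ is stably isomorphic to a real bundle $\xi$ of dimension $2s+1$, so that $[\theta_\R]=[\xi]+(2k-2s-1)$ in $\kot(X)$; the case $s\ge k$ is vacuous since $2^k\Gamma(\theta)=\sum_i\lambda^i(\theta)\in K(X)$ always, so assume $2s+1<2k$. As $w_1$ and the integral third Stiefel--Whitney class are stable, $\xi$ admits a $\spin^c$ structure, and since $w_2(\xi)\equiv c_1(\theta)\bmod2$ we may choose it so that its canonical (determinant) line bundle has Chern class $c_1(\theta)$. Because $\xi$ has \emph{odd} rank $2s+1$, this $\spin^c$ structure carries an honest complex spinor bundle $\Delta^c(\xi)$ of complex rank $2^s$, and the Atiyah--Bott--Shapiro computation gives
$$ch\bigl(\Delta^c(\xi)\bigr)=2^s\,e^{c_1(\theta)/2}\prod_{j=1}^s\cosh(y_j/2),$$
where $\pm y_1,\dots,\pm y_s$ are the formal Pontryagin roots of $\xi$. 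On the other hand the splitting principle gives $ch(\Gamma(\theta))=e^{c_1(\theta)/2}\prod_{i=1}^k\cosh(t_i/2)$ with $t_i$ the Chern roots of $\theta$; as $\cosh(u/2)$ is an even power series in $u$ and $p(\theta_\R)=p(\xi)$ (these classes differ by a trivial bundle), the two $\cosh$-products agree, so $ch(2^s\Gamma(\theta))=ch([\Delta^c(\xi)])$.

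Finally, $ch\colon K(X)\ot\Q\to H^*(X;\Q)$ is an isomorphism, so $2^s\Gamma(\theta)-[\Delta^c(\xi)]$ is a torsion element of $K(X)\ot\Z[\tfrac12]$, i.e.\ of odd order; but every such element is the image of a torsion class of $K(X)$, and $[\Delta^c(\xi)]\in K(X)$, so $2^s\Gamma(\theta)$ lies in the image of $K(X)$, i.e.\ $2^s\Gamma(\theta)\in K(X)$. I expect the main obstacle to be the $\spin^c$ bookkeeping in the third paragraph: verifying that $\xi$ inherits a $\spin^c$ structure with canonical class exactly $c_1(\theta)$, and that an odd-rank $\spin^c$ bundle really does possess a genuine rank-$2^s$ spinor bundle with the asserted Chern character. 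That is the one place where the \emph{a priori} only $\tfrac12$-integral class $2^s\Gamma(\theta)$ is exhibited as an actual bundle, and it is precisely where the hypothesis that $\dim\xi=2s+1$ is odd is used.
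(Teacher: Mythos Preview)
The paper does not prove Theorem~\ref{Bak}; it simply quotes it from \cite[Theorem~4.4]{Baker} and moves on. So there is nothing to compare your argument against.

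That said, your proposal is essentially correct and follows the standard route. The construction of $\Gamma$ via $2^{-k}\sum_i\lambda^i$ is exactly the right definition (these are indeed the halved $K$-theory Chern classes the paper mentions), and your verification of naturality, multiplicativity, and stability is clean. For the integrality half, exhibiting $2^s\Gamma(\theta)$ as $[\Delta^c(\xi)]$ up to odd torsion via the spinor bundle of an odd-rank $\spin^c$ representative is precisely the argument in Baker's paper and its predecessors. Your own flagged worry---the $\spin^c$ bookkeeping---is the only place requiring care, but it goes through: $w_1$ and $W_3$ are stable, so $\xi$ inherits a $\spin^c$ structure from $\theta_\R$; the determinant class can be chosen to be $c_1(\theta)$ since $\spin^c$ structures with a given $w_2$-reduction form a torsor over $2H^2$; and for $\on{Spin}^c(2s+1)$ there is a genuine complex spinor representation of dimension $2^s$ whose character is $e^{d/2}\prod_j 2\cosh(y_j/2)$, as you assert. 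The matching of the $\cosh$-products via $p(\theta_\R)=p(\xi)$ rationally is valid because $\cosh(u/2)$ is an even power series, hence a universal polynomial in the Pontryagin classes.

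One small point: your reduction ``the case $s\ge k$ is vacuous'' tacitly assumes $\theta$ is represented by an honest rank-$k$ complex bundle, which is fine for the paper's applications (normal bundles of immersions) but worth stating if you want the result for arbitrary virtual bundles.
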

\noindent The components of $\G(X)$ are just divided versions of the $K$-theoretic Chern classes.

Now we can prove Theorem \ref{nonimm1}.

\begin{proof}[Proof of Theorem \ref{nonimm1}] Let $\eta$ be the normal bundle of an immersion of $N_{n,k}$ in $\R^t$ for some $t$.
By Corollary \ref{tan}, the tangent bundle of $N_{n,k}$ is stably isomorphic to $\ds\bigoplus_{i=1}^kL_i^2L_R\oplus(n-2-k)L_R$, and hence, with $\G(-)$ as in Theorem \ref{Bak},
\begin{eqnarray}
\G(\eta)&=&\G(\tau)^{-1}=\prod_{i=1}^k\G(L_i^2L_R)^{-1}\cdot\G(L_R)^{-(m+1-k)}\nonumber\\
&=&\prod_{i=1}^k(1+\a_i+\tfrac12\b+\tfrac12\a_i\b)^{-1}\cdot(1+\tfrac12\b)^{-(m+1-k)}\nonumber\\
&=&\prod_{i=1}^k(1+\a_i)^{-1}\cdot(1+\tfrac12\b)^{-(m+1)}\label{gnor}\\
&=&(1+\tfrac12\b)^{-(m+1)}+\sum_{i,j}c_{i,j}\a_i^j,\nonumber\end{eqnarray}
using the relations in Theorem \ref{ring} in the last step. Here $c_{i,j}$ is an element of $\Z_{(2)}$ whose value does not matter.
These terms are independent by Proposition \ref{ind}, and
one of the terms is $\binom{-m-1}{m-1}\b^{m-1}/2^{m-1}$. The binomial coefficient here is $(-1)^{m-1}\binom{2m-1}{m-1}=(-1)^{m-1}\frac12\binom{2m}m$, so its exponent of 2 is $\a(m)-1$. Thus $2^{m-\a(m)-1}\cdot\binom{-m-1}{m-1}\b^{m-1}/2^{m-1}$ is not integral, and so by Theorem \ref{Bak}, $\eta$ is not stably equivalent to a bundle of dimension $2(m-\a(m)-1)+1$. If a $d$-dimensional manifold immerses in $\R^{d+c}$, then its normal bundle is $c$-dimensional. Thus we obtain the theorem.\end{proof}

Exactly as we did for planar polygon spaces in \cite{mf}, we can prove that the total Stiefel-Whitney class of the tangent bundle of $N(\ell)$ is $(1+R)^{m+1}\in H^*(N(\ell);\zt)$ with $|R|=2$, where $m$, as throughout this paper, is 3 less than the length of $\ell$.
Similarly to \cite[Corollary 1.5]{mf}, for each $m$ from 16 to 31, Stiefel-Whitney classes give a nonimmersion of $N_{n,k}$ in $\R^{61}$. For these values of $m$, Theorem \ref{nonimm1} gives nonimmersions of $N_{n,k}$ in Euclidean spaces of the following dimensions: 61, 63, 67, 69, 75, 77, 81, 83, 91, 93, 97, 99, 105, 107, 111, and 113.

\section{Proof of Theorem \ref{nonimm3}}
We can expand these results easily if we restrict to cases in which relations of type c do not appear in $H^*(N(\ell))$. Let $s$ denote the maximal size of a gee of $\ell$.
In grading $\le 2(m-s)$, there are no type-c relations. In this range, a basis for $H^*(N(\ell))$ consists of all $R^iV_S$ such that $S$ is a subgee and $i+|S|\le m-s$. This includes the empty subgee.

Using the notation and methods used earlier, we have the following result.
\begin{thm} \label{few}Let $s$ equal the maximal size of a gee of $\ell$, and $k$ the largest size-$1$ subgee of $\ell$. The ring $K(N(\ell))$ is generated by $\a_1,\ldots,\a_k$, and $\b$. The quotient, modulo $(m-s+1)$-fold (or more) products of these, is
$$\Z[\a_1,\ldots,\a_k,\b]/(\a_i\b+\tfrac{\a_i^2}{1+\a_i}, \prod_{i\in T}\a_i\text{: $T$ not a subgee}).$$
A basis for this quotient consists of all $\b^i\prod_{j\in S}\a_j$ such that $S$ is a subgee and $i+|S|\le m-s$.\end{thm}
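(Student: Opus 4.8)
The plan is to mimic closely the proofs of Propositions \ref{cohnk}, \ref{ind} and Theorem \ref{ring}, since in the truncated range $\le 2(m-s)$ the cohomology of $N(\ell)$ behaves exactly like a polynomial ring on the $V_i$ and $R$ subject only to the ``trivial'' relations (type a and type b), with the type-c relations pushed out of range. Concretely, I would first record, from the discussion preceding the theorem, that a basis for $\bigoplus_{j\le m-s}H^{2j}(N(\ell))$ is given by the monomials $R^iV_S$ with $S$ a subgee and $i+|S|\le m-s$, and that in this range the only relations among the $R,V_i$ are $V_iV_j=0$ for $i\ne j$ (a consequence of type a, since $\{i,j\}$ is never a subgee once $\{i\}$ and $\{j\}$ are both size-$1$ subgees and $\ell$ is generic), $V_i^2=-RV_i$ (type b), and the vanishing of $V_S$ for $S$ not a subgee (type a).

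Next I would pass to $K$-theory exactly as in Proposition \ref{ind}: the elements $\b^i\prod_{j\in S}\a_j$ with $S$ a subgee and $i+|S|\le m-s$ have leading Chern-character term $R^i V_S$, so the first graded piece of $ch$ carries them to a basis of $H^*$ in the truncated range; the Atiyah--Hirzebruch argument of Proposition \ref{ind} (collapse of the spectral sequence since $H^*(N(\ell))$ is even and torsion-free, plus the splitting of the skeletal filtration using \cite{Ad}) then shows these elements are a $\Z$-basis for the quotient $\kt(N(\ell))/F$, where $F$ is the subgroup of classes trivial on the $2(m-s)$-skeleton. Because $H^*(N(\ell))$ is generated by $H^2$, the same argument shows $\a_1,\dots,\a_k,\b$ generate $K(N(\ell))$ as a ring (every $V_i$ with $i>k$ vanishes, and $R=c_1(L_R)$, $V_i=c_1(L_i)$); here $F$ is exactly the ideal of $(m-s+1)$-fold and higher products, since products of $\ge m-s+1$ of the $\a_i,\b$ have Chern character starting in degree $\ge 2(m-s+1)$ and conversely the basis above generates everything in lower filtration.

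Then I would verify the presentation. Define the candidate ring $A=\Z[\a_1,\dots,\a_k,\b]/(\a_i\b+\a_i^2/(1+\a_i),\ \prod_{i\in T}\a_i:\ T\text{ not a subgee})$ — more precisely clear denominators: $\a_i\b(1+\a_i)+\a_i^2$, which is legitimate since $1+\a_i$ is a unit, being $1+$(nilpotent). There is an evident ring map $A\to K(N(\ell))/F$ sending the generators to the $\a_i,\b$; the relations hold in $K(N(\ell))$ by Lemma \ref{lemrel} and Theorem \ref{ring} (the relation $\a_i\b=-\a_i^2/(1+\a_i)$ is literally $\a_i\b$ rewritten, and $\prod_{i\in T}\a_i=0$ for $T$ not a subgee follows since its Chern character is $V_T+\cdots=0$ in the truncated range, hence it lies in $F$). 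So the map is well-defined. It is surjective because the $\a_i,\b$ generate. For injectivity I would show that $A$ is spanned as an abelian group by the monomials $\b^i\prod_{j\in S}\a_j$ with $S$ a subgee and $i+|S|\le m-s$: using $\a_i\a_j=0$ for $i\ne j$ (forced by $\prod_{i\in T}\a_i=0$ for two-element non-subgees $T$) every monomial reduces to the form $\b^i\a_j^r$ or $\b^i$; using $\a_j\b=-\a_j^2/(1+\a_j)=-\a_j^2+\a_j^3-\cdots$ one rewrites any $\b$ adjacent to an $\a_j$, pushing things to pure $\a_j$-powers and pure $\b$-powers, modulo higher products which vanish in $A$'s relevant quotient; and $\prod_{i\in S}\a_i=0$ when $S$ is not a subgee kills the rest. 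Hence $A$ surjects onto the free abelian group on that basis, and since the composite $A\to K(N(\ell))/F\cong$ (free on that basis) is an isomorphism on that spanning set, $A\to K(N(\ell))/F$ is an isomorphism.

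The main obstacle I anticipate is the bookkeeping in the spanning-set reduction for $A$: one must check that rewriting $\b$'s into $\a$-powers via the geometric series $-\a_i^2/(1+\a_i)$ terminates and never produces a monomial outside the claimed basis except ones that are already zero (mixed $\a_i\a_j$ products, non-subgee products, or products of total ``weight'' $>m-s$ which we are quotienting out). This is essentially the same computation as in Proposition \ref{cohnk} and Theorem \ref{ring} but done with subgees instead of just singletons $\{1,\dots,k\}$, so it is routine; the only subtlety is confirming that the combinatorics of the partial order guarantee $\a_i\a_j=0$ whenever $\{i,j\}$ is not a subgee, which is exactly relation a of Theorem \ref{cohthm} transported to $K$-theory via the leading term of $ch$, valid because $\{i,j\}$ has size $2\le m-s$ lies in the torsion-free even range.
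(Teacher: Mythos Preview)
Your overall strategy---leading-term Chern character plus the Atiyah--Hirzebruch collapse, exactly as in Proposition \ref{ind} and Theorem \ref{ring}---is the same as the paper's, and that part is fine. But your spanning-set argument for the presentation contains a genuine error.

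You assert repeatedly that $\a_i\a_j=0$ (equivalently $V_iV_j=0$) for all $i\ne j$, claiming ``$\{i,j\}$ is never a subgee once $\{i\}$ and $\{j\}$ are both size-$1$ subgees.'' This is false: for example with genetic code $\{\{n,k,1\}\}$ the sets $\{i,1\}$, $2\le i\le k$, are subgees, and $V_1V_i\ne0$. Indeed the basis stated in the theorem itself contains the products $\prod_{j\in S}\a_j$ for \emph{every} subgee $S$, including those with $|S|\ge2$. So reducing every monomial to the form $\b^i\a_j^r$ or $\b^i$ is not possible in general and contradicts the conclusion you are trying to prove.

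Relatedly, your rewriting goes in the wrong direction. You use $\a_j\b=-\a_j^2+\a_j^3-\cdots$ to trade $\b$'s for higher $\a_j$-powers; that moves you away from the squarefree basis $\b^i\prod_{j\in S}\a_j$. The relation should be inverted: from $\a_j\b(1+\a_j)+\a_j^2=0$ one gets $\a_j^2=-\a_j\b(1+\b)^{-1}$, so any power $\a_j^e$ with $e\ge2$ is rewritten as $\a_j$ times a power series in $\b$. Applying this to each variable separately reduces an arbitrary monomial to a sum of terms $\b^i\prod_{j\in S}\a_j$ with $S$ a set (no repeated indices); then the relation $\prod_{j\in T}\a_j=0$ for $T$ not a subgee kills exactly those with $S$ not a subgee. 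That is the correct spanning argument, and with it your proof matches the paper's.
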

\begin{proof} Since $ch(\a_i^j)\equiv V_i^j$ and $ch(\b^j)\equiv R^j$ mod higher-degree classes, the Atiyah-Hirzebruch argument in the proof of Proposition \ref{ind} implies that these stated classes generate. The relations are exactly those which give a cohomology relation when $ch$ is applied. The first type of relation is obtained as in the proof of Theorem \ref{ring}, and the second type follows from type-a relations in cohomology. For every type-c relation in cohomology, there is a unique $K$-theory relation mapping to it, and this relation will involve only $(m-s+1)$-fold (or more) products of the generators. Thus they will not affect the quotient of $K(N(\ell))$ being considered.\end{proof}

\begin{proof}[Proof of Theorem \ref{nonimm3}]
Corollary \ref{tan}, Lemma \ref{lemrel}, and (\ref{gnor}) all apply exactly as in the proof of Theorem \ref{nonimm1}, as does the argument about independence of pure $\b$ classes from those involving any $\a_i$'s. The only difference from the situation of the previous section is that all we can assert is that certain multiples of $\b^i$ are nonzero for $i\le m-s$.
Thus $\G(\eta)$ contains independent terms $2^{-i}\binom{-m-1}i\b^i$ for $i\le m-s$. Note that $\binom{-m-1}i=\pm\binom{m+i}i$.
If $M$ is as in Theorem \ref{nonimm3},
then $\G(\eta)$ contains an independent term $2^{-M}\b^i$, so $2^{M-1}\G(\eta)$ is not integral. Therefore by Theorem \ref{Bak}, the dimension of the normal bundle is greater than $2(M-1)+1$, and so the manifold does not immerse in this codimension.\end{proof}

The results implied by Stiefel-Whitney classes are exactly those in which an $i$ which determines $M$ has $\binom{m+i}i$ odd. In Table \ref{T1}, we list the nonimmersion dimension implied by Theorem \ref{nonimm3} for $2m$-dimensional $N(\ell)$ having largest gee size $s$. Those having value $\le 61$ (except for the 55 in columns 7 and 8) are also implied by Stiefel-Whitney classes, but 61 is the largest that Stiefel-Whitney classes can imply in any of the tabulated cases.

\begin{table}[H]
\caption{Nonimmersion dimensions implied by Theorem \ref{nonimm3}}
\label{T1}
\begin{tabular}{c|cccccccc}
$m/s$&$1$&$2$&$3$&$4$&$5$&$6$&$7$&$8$\\
\hline
$16$&$61$&$59$&$57$&$55$&$53$&$51$&$49$&$47$\\
$17$&$63$&$61$&$61$&$57$&$57$&$53$&$53$&$49$\\
$18$&$67$&$65$&$61$&$61$&$61$&$59$&$55$&$53$\\
$19$&$69$&$67$&$67$&$61$&$61$&$61$&$61$&$55$\\
$20$&$75$&$73$&$71$&$69$&$63$&$61$&$61$&$61$\\
$21$&$77$&$75$&$75$&$71$&$71$&$63$&$63$&$61$\\
$22$&$81$&$79$&$75$&$75$&$75$&$73$&$65$&$63$\\
$23$&$83$&$81$&$81$&$75$&$75$&$75$&$75$&$65$\\
$24$&$91$&$89$&$87$&$85$&$83$&$81$&$79$&$77$\\
$25$&$93$&$91$&$91$&$87$&$87$&$83$&$83$&$79$\\
$26$&$97$&$95$&$91$&$91$&$91$&$89$&$85$&$83$\\
$27$&$99$&$97$&$97$&$91$&$91$&$91$&$91$&$85$\\
$28$&$105$&$103$&$101$&$99$&$95$&$93$&$91$&$89$\\
$29$&$107$&$105$&$105$&$101$&$101$&$95$&$95$&$91$\\
$30$&$111$&$109$&$105$&$105$&$105$&$103$&$97$&$95$\\
$31$&$113$&$111$&$111$&$105$&$105$&$105$&$105$&$97$
\end{tabular}
\end{table}

\section{Proof of Theorem \ref{nonimm2}} \label{sec3}
For $1<k<n$, let $N_{n,k,1}=N(\ell)$ when the genetic code of $\ell$ is $\{\{n,k,1\}\}$.
In this section we determine the algebra $K(N_{n,k,1})$ and use it to prove the nonimmersion Theorem \ref{nonimm2}.  Throughout we let $m=n-3$. We begin with (integral) cohomology.
\begin{thm} \label{nk1coh} Let $N=N_{n,k,1}$. The algebra $H^*(N)$ is generated by degree-$2$ classes $R,V_1,\ldots,V_k$. In grading $2i$ with $2\le i\le m-2$, the only relations are due to $RV_i=-V_i^2$, and $V_iV_j=0$ if $2\le i<j$, so a basis is given by $$\{R^i, V_1^i,\ldots,V_k^i, V_1V_2^{i-1},\ldots, V_1V_k^{i-1}\}.$$ A basis for $H^{2(m-1)}(N)$ is $\{V_1^{m-1},V_k^{m-1},V_1V_2^{m-2},\ldots,V_1V_k^{m-2}\}$, with $V_2^{m-1}=\cdots=V_k^{m-1}$ and $R^{m-1}=
(-1)^{m-1}(k-2)V_k^{m-1}$. In $H^{2m}(N)\approx\Z$, we have $R^m=V_2^m=\cdots=V_k^m=0$, $V_1V_2^{m-1}=\cdots=V_1V_k^{m-1}$ a generator, and $V_1^m=(k-2)V_1V_k^{m-1}$. Finally $H^i(N)=0$ if $i>2m$.\end{thm}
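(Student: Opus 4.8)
Following the template of the proof of Proposition~\ref{cohnk}, the plan is to evaluate the presentation of Theorem~\ref{cohthm} explicitly for the genetic code $\{\{n,k,1\}\}$. The preliminary step is to identify the subgees: the unique gee is $\{k,1\}$, and the subsets $T\subseteq[\![n-1]\!]$ with $T\le\{k,1\}$ in the order of \cite{HR} are exactly $\emptyset$, the singletons $\{i\}$ for $1\le i\le k$, and the pairs $\{1,i\}$ for $2\le i\le k$. Hence relation~a of Theorem~\ref{cohthm} forces $V_i=0$ for $i>k$, forces $V_iV_j=0$ for $2\le i<j\le k$, and annihilates every monomial in the $V$'s whose support is not a subgee; the surviving monomials are the powers $V_i^a$ ($1\le i\le k$) and the mixed monomials $V_1^aV_i^b$ ($2\le i\le k$), so $H^*(N)$ is generated by $R,V_1,\dots,V_k$. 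From relation~b one obtains $R^aV_i=(-1)^aV_i^{a+1}$ for $a\ge1$, and, comparing the two evaluations of $RV_1V_i$, the identity $V_1^2V_i=V_1V_i^2$ for $2\le i\le k$. Together these reduce every degree-$2i$ monomial to one of $R^i$, $V_j^i$ ($1\le j\le k$), or $V_1V_j^{i-1}$ ($2\le j\le k$).

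For $2\le i\le m-2$, relation~c contributes nothing, since it would require a subgee of size $\ge n-2-i=m+1-i\ge3$ while subgees have size at most $2$. Thus the $2k$ standard monomials above span $H^{2i}(N)$. Since $H^*(N)$ is torsion-free and $2k$ equals the rank of $H^{2i}(N)$ given by the Poincar\'e polynomial of $N(\ell)$ from \cite{HK} (equivalently, since a,b,c leave exactly these as the standard monomials of a Gr\"obner basis), a spanning set of the correct cardinality is a $\Z$-basis, which is the asserted basis in this range.

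The substance is in the two top gradings, where relation~c enters. In grading $2(m-1)$ the only type-c relations come from the size-$2$ subgees $T=\{1,j\}$, $2\le j\le k$, and each reads $R^{m-1}+R^{m-2}\sum_{2\le l\le k,\,l\ne j}V_l=0$. Differencing the relations for two indices $j,j'$ gives $R^{m-2}V_j=R^{m-2}V_{j'}$, that is $V_2^{m-1}=\dots=V_k^{m-1}$ after applying $R^{m-2}V_j=(-1)^mV_j^{m-1}$; substituting back gives $R^{m-1}=(-1)^{m-1}(k-2)V_k^{m-1}$. The standard spanning set then collapses to the claimed $k+1$ monomials, and since Poincar\'e duality gives $\dim H^{2(m-1)}(N)=\dim H^2(N)=k+1$, these form a basis. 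In grading $2m$ the type-c relations are the $\mathcal R_{\{i\}}$ ($1\le i\le k$) together with the $\mathcal R_{\{1,j\}}$ ($2\le j\le k$), and the order of exploitation matters: $\mathcal R_{\{1\}}-\mathcal R_{\{1,j\}}$ gives $R^{m-1}V_j=0$ (hence $V_j^m=0$) for $2\le j\le k$, after which $\mathcal R_{\{1\}}$ collapses to $R^m=0$. Substituting $R^m=0$ and $R^{m-1}V_j=0$ into $\mathcal R_{\{i\}}$ for $2\le i\le k$ reduces it, via $R^{m-2}V_1V_j=(-1)^mV_1V_j^{m-1}$ and $R^{m-1}V_1=(-1)^{m-1}V_1^m$, to $V_1^m=\sum_{2\le j\le k,\,j\ne i}V_1V_j^{m-1}$; differencing over $i$ yields $V_1V_2^{m-1}=\dots=V_1V_k^{m-1}$ and then $V_1^m=(k-2)V_1V_k^{m-1}$. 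Since $N$ is a closed orientable $2m$-manifold $H^{2m}(N)\cong\Z$, and since the degree-$2m$ part of the quotient is cyclic on $V_1V_k^{m-1}$, that class is a generator. Finally $H^i(N)=0$ for $i>2m$ because $\dim N=2m$.

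The main obstacle is the grading-$2m$ computation: the type-c relations there do not simplify independently, and one must use the length-two relations $\mathcal R_{\{1,j\}}$ to kill $R^{m-1}V_j$, $V_j^m$, and $R^m$ before the length-one relations $\mathcal R_{\{i\}}$ become tractable. Keeping the signs correct (everything reduces to $R^aV_i=(-1)^aV_i^{a+1}$) and checking that each top-grading quotient has exactly the claimed rank (via Poincar\'e duality and the Betti numbers of \cite{HK}) is all that requires care; none of it is deep, and it runs parallel to Proposition~\ref{cohnk}.
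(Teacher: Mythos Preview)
Your argument is correct and follows essentially the same route as the paper: specialize the presentation of Theorem~\ref{cohthm} to the subgee list $\{\emptyset,\{i\},\{1,j\}\}$, use relations a and b to reduce to the standard monomials, and then handle the type-c relations in gradings $2(m-1)$ and $2m$. The only stylistic difference is that you \emph{solve} the type-c systems by differencing and back-substitution, whereas the paper simply writes down the relations $\mathcal R_{\{1,i\}}$, $\mathcal R_{\{1\}}$, $\mathcal R_{\{i\}}$ and checks that the asserted identities are consistent with them; both directions are completed by the same rank count via Poincar\'e duality (and torsion-freeness of $H^*(N(\ell))$).
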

\begin{proof} This follows easily from Theorem \ref{cohthm}. The type-c relations in $H^{2(m-1)}(N)$ are $R^{m-1}+(-1)^{m-2}\ds\sum_{j\ge2,j\ne i}V_j^{m-1}=0$. One can solve this system by row-reduction, but it is easier just to verify that the relations stated in the theorem are consistent with these. Similarly, in $H^{2m}(N)$, one can check that the relations stated in the theorem are consistent with the type-c relations $\mathcal{R}_{1,i}$, $\mathcal{R}_1$, and $\mathcal{R}_i$, $i>1$:
$$R^m+(-1)^{m-1}\sum_{j\ge 2,j\ne i}V_j^m=0,\qquad R^m+(-1)^{m-1}\sum_{j=2}^kV_j^m=0,$$
and $$R^m+(-1)^{m-1}\sum_{j\ne i}V_j^m+(-1)^{m-2}\sum_{j\ge2,j\ne i}V_1V_j^{m-1}=0.$$
\end{proof}

\begin{thm} \label{Knk1} The algebra $K(N_{n,k,1})$ is generated by classes $\a_1,\ldots,\a_k,\b$ with relations $\a_i\b=-\a_i^2/(1+\a_i)$, $\a_i\a_j=0$ if $2\le i<j$, $\a_2^{m-1}=\cdots=\a_k^{m-1}$, and $\b^{m-1}=(-1)^{m-1}(k-2)\a_k^{m-1}$. A basis consists of
\begin{eqnarray*}&&\{1,\a_i^j\ (1\le i\le k,\ 1\le j\le m-2), \b,\ldots,\b^{m-2}, \a_1^{m-1},\a_k^{m-1},\\
&&\a_1\a_i^{j} (2\le i\le k,\ 1\le j\le m-2),\ \a_1\a_k^{m-1}\}.\end{eqnarray*}\end{thm}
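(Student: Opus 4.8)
The plan is to mirror the strategy already used for Theorems \ref{ring} and \ref{few}: translate the cohomology computation of Theorem \ref{nk1coh} into $K$-theory via the Chern character, using the fact (established in the proof of Proposition \ref{ind}) that for a space with torsion-free cohomology concentrated in even degrees, a set of elements on which the leading term of $ch$ gives a basis of $H^*$ is itself an additive basis of $K$, and that the multiplicative relations in $K$ are exactly those elements of the polynomial ring on $\a_1,\dots,\a_k,\b$ that $ch$ sends to relations in $H^*(N)$.

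First I would record the leading-term identities $ch(\a_i^j)\equiv V_i^j$, $ch(\b^j)\equiv R^j$, and $ch(\a_1\a_i^j)\equiv V_1V_i^j$ (modulo higher filtration), so that the proposed basis maps under $ch$ to the cohomology basis of Theorem \ref{nk1coh} degree by degree: the $1,\a_i^j,\b^j$ in degrees $2\le 2j\le 2(m-2)$, then $\a_1^{m-1},\a_k^{m-1}$ and the $\a_1\a_i^j$ in the top degrees. Counting shows the cardinalities match Theorem \ref{nk1coh} in each grading, so the Atiyah--Hirzebruch/filtration argument from Proposition \ref{ind} gives that these elements form a $\Z$-basis of $K(N_{n,k,1})$. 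Next I would verify that each asserted relation is sent by $ch$ to zero (or to a valid cohomology relation): $\a_i\b=-\a_i^2/(1+\a_i)$ comes from the computation in the proof of Theorem \ref{ring} (using that $R$ acts as $-V_i$ after multiplication by $V_i$); $\a_i\a_j=0$ for $2\le i<j$ comes from the type-a relation $V_iV_j=0$; and $\a_2^{m-1}=\cdots=\a_k^{m-1}$ together with $\b^{m-1}=(-1)^{m-1}(k-2)\a_k^{m-1}$ come from the stated cohomology relations $V_2^{m-1}=\cdots=V_k^{m-1}$ and $R^{m-1}=(-1)^{m-1}(k-2)V_k^{m-1}$, once one checks that the corresponding $K$-theory elements differ from their $ch$-images only by higher-filtration corrections that are forced to vanish by the basis count.

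The one genuinely delicate point is completeness of the relation list, i.e.\ that these relations generate all relations and that the quotient of $\Z[\a_1,\dots,\a_k,\b]$ by them has exactly the claimed basis, with no further identifications hidden in the top gradings. The subtlety is that Theorem \ref{nk1coh} has several type-c relations in $H^{2(m-1)}$ and $H^{2m}$ (the $\mathcal R_{1,i},\mathcal R_1,\mathcal R_i$), and one must confirm that the $K$-theoretic lifts of those force precisely $\a_2^{m-1}=\cdots=\a_k^{m-1}$, $\b^{m-1}=(-1)^{m-1}(k-2)\a_k^{m-1}$, and the top-degree identities $\a_1\a_2^{m-1}=\cdots=\a_1\a_k^{m-1}$, $\a_1^m=(k-2)\a_1\a_k^{m-1}$ and $\b^m=\a_2^m=\cdots=0$ as consequences of the listed relations, rather than something weaker or stronger. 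I would handle this the same way Theorem \ref{nk1coh}'s proof handles it on the cohomology side: rather than row-reducing, simply observe that the quotient ring defined by the stated $K$-theory relations surjects onto $K(N_{n,k,1})$ (the relations hold), and that this quotient is spanned over $\Z$ by the listed basis set (a direct monomial bookkeeping, using $\a_i\b\equiv -\a_i^2\bmod \a_i^3$ to eliminate $\b$ from all mixed monomials, $\a_i\a_j=0$ to kill cross terms, and the $(m-1)$-st power relations to collapse the top); since this spanning set has the same cardinality in each filtration degree as the genuine basis of $K(N_{n,k,1})$, the surjection is an isomorphism. The only real work is the monomial bookkeeping that the quotient is spanned by exactly those elements and no more; everything else is a routine transcription of the already-proved cohomological facts.
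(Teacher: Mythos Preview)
Your proposal is correct and follows essentially the same approach as the paper: verify the relations by applying $ch$ and using the cohomology relations of Theorem \ref{nk1coh}, then show the listed monomials form a basis by the filtration/leading-term argument together with a monomial-reduction bookkeeping. The paper's proof is terser---it simply notes that the stated relations imply the derived ones $\a_1^t\a_i^j=\a_1\a_i^{j+t-1}$, $\a_i^m=0$ ($i\ge2$), $\b^m=0$, $\a_1^m=(k-2)\a_1\a_k^{m-1}$, which reduce everything to the claimed basis, and that $ch$ sends these to linearly independent cohomology classes---but your more careful treatment of the top-degree ``completeness'' issue is the same argument spelled out.
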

\begin{proof} The generators are as defined previously, and the relations are easily established by applying $ch$ and using the cohomology relations. The stated $K$-theory relations imply the following additional relations: $\a_1^t\a_i^j=\a_1\a_i^{j+t-1}$, $\a_i^m=0$ ($2\le i\le k$), $\b^m=0$, and $\a_1^m=(k-2)\a_1\a_k^{m-1}$. These relations enable reduction of everything to terms in our asserted basis, and the listed monomials are linearly independent since applying $ch$ to them yields linearly independent cohomology classes.\end{proof}

\begin{rmk} {\rm Although in the cases considered here the $K$-theory relations have corresponded exactly to the cohomology relations (except for the $\a_i\b$ relation), this will not continue. We have studied the case in which the genetic code is $\{\{n,k,2\}\}$. In this case some $K$-theory relations contain an additional term. The complicated details cause us not to include it in this paper.}\end{rmk}

\begin{proof}[Proof of Theorem \ref{nonimm2}] The first part follows from Theorem \ref{nonimm3}, using $i=m-2$ if $m$ is even, and $i=m-3$ if $m$ is odd. Standard techniques imply that $\nu\binom{2m-2}{m-2}=\a(m)-1$ if $m$ is even, and $\nu\binom{2m-3}{m-3}=\a(m)-2$ if $m$ is odd. In both cases $i-\nu\binom{m+i}i=m-\a(m)-1$.

We might hope to use $2^{-(m-1)}\binom{2m-1}{m-1}\b^{m-1}=2^{\a(m)-m}u\b^{m-1}$ with $u$ odd to show that $2^{m-\a(m)-1}\G(\eta)$ is not integral and deduce a nonimmersion in $\R^{4m-2\a(m)-1}$. If $k$ is even, the relation $\b^{m-1}=(-1)^{m-1}(k-2)\a_k^{m-1}$ prevents this from working. We will show that it works when $k$ is odd, but since $\b^{m-1}$ is not independent of $\a_k^{m-1}$, we must consider other terms.  We have, similarly to (\ref{gnor}),
\begin{eqnarray*} \G(\eta)&=&\prod_{i=1}^k((1+\a_i)(1+\tfrac12\b))^{-1}\cdot(1+\tfrac12\b)^{-(m-k+1)}\\
&=&(1+\a_1)^{-1}(1+\a_2+\cdots+\a_k)^{-1}(1+\tfrac12\b)^{-(m+1)}\\
&=&(1+\a_2+\cdots+\a_k)^{-1}\bigl((1+\tfrac12\b)^{-(m+1)}-\tfrac{\a_1}{1+\a_1}(1-\tfrac{\a_1}{2(1+\a_1)})^{-(m+1)}\bigr)\\
&=&\bigl(1+\sum_{t>0,j\ge2}\a_j^t\bigr)\bigl((1+\tfrac12\b)^{-(m+1)}-\a_1(1+\a_1)^m(1+\tfrac12\a_1)^{-(m+1)}\bigr).\end{eqnarray*}
The desired term, $2^{-(m-1)}\binom{-m-1}{m-1}\b^{m-1}$, is apparent, but we must consider possible cancelling terms.

The terms $\a_1^{t}$ and $\a_1\a_j^{t}$ are independent of $\b^{m-1}$ and so need not be considered, but we must consider the coefficients of $\a_j^{m-1}$ for $2\le j\le k$, since these terms are related to $\b^{m-1}$. Since (as already used) $\b$ acts as $-\a_j/(1+\a_j)$ when multiplied by $\a_j$, we obtain as possible cancelling terms
\begin{equation}\label{all}\sum_{t>0,j\ge2}\a_j^t\bigl(1-\tfrac{\a_j}{2(1+\a_j)}\bigr)^{-(m+1)}=\ds\sum_{t>0,j\ge2}\a_j^t(1+\a_j)^{m+1}/(1+\tfrac12\a_j)^{m+1}.\end{equation}
We will prove in Proposition \ref{combprop} that the 2-exponent of the coefficient of $x^i$ in $(1+x)^{m+1}/(1+\frac12x)^{m+1}$ is $\ge \a(m)-m$ for $i\le m-1$. Thus the coefficient of $\a_j^{m-1}$ in the RHS of (\ref{all}) has 2-exponent $\ge\a(m)-m$. Since all these $\a_j^{m-1}$ are equal and have the same coefficient, and there is an even number of them, this RHS contains the term $2^v\a_k^{m-1}$ with $v>\a(m)-m$. Since $\b^{m-1}$ is an odd multiple of $\a_k^{m-1}$, the combination of all terms is an odd multiple of $2^{\a(m)-m}$ times the basis element $\a_k^{m-1}$, and so $2^{m-\a(m)-1}\G(\eta)$ is not integral, from which the result follows.
\end{proof}
\begin{prop}\label{combprop} For $i\le m-1$, $\nu([x^i]:((1+2x)/(1+x))^{m+1})\ge i+\a(m)-m$.\end{prop}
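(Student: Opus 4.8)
The plan is to expand $\bigl(\tfrac{1+2x}{1+x}\bigr)^{m+1}$ in a way that makes each coefficient a sum whose every term is visibly divisible by the required power of $2$, so that no cancellation between terms is needed. The useful observation is that $\tfrac{1+2x}{1+x}=\bigl(1-\tfrac{x}{1+2x}\bigr)^{-1}$, which gives, as formal power series,
$$\Bigl(\tfrac{1+2x}{1+x}\Bigr)^{m+1}=\sum_{\ell\ge0}\binom{m+\ell}{\ell}\Bigl(\tfrac{x}{1+2x}\Bigr)^{\ell}.$$
Since $[x^i]\bigl(x^\ell(1+2x)^{-\ell}\bigr)=(-2)^{i-\ell}\binom{i-1}{\ell-1}$ for $1\le\ell\le i$ and vanishes for $\ell>i$, extracting the coefficient of $x^i$ (with $i\ge1$) yields
$$[x^i]\Bigl(\tfrac{1+2x}{1+x}\Bigr)^{m+1}=\sum_{\ell=1}^{i}(-2)^{i-\ell}\binom{m+\ell}{\ell}\binom{i-1}{\ell-1}.$$
(For $i=0$ the coefficient is $1$ and the asserted bound $\a(m)-m$ is nonpositive, so that case is trivial.)

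I would then bound each summand separately. The $\ell$-th term has $\nu\ge(i-\ell)+\nu\binom{m+\ell}{\ell}$, so it suffices to prove
$$\nu\binom{m+\ell}{\ell}\ge\ell+\a(m)-m\qquad\text{for }1\le\ell\le m.$$
By Legendre's formula $\nu(n!)=n-\a(n)$ one has $\nu\binom{m+\ell}{\ell}=\a(\ell)+\a(m)-\a(m+\ell)$, so the inequality is equivalent to $\a(m+\ell)-\a(\ell)\le m-\ell$. Writing $m+\ell=(m-\ell)+2\ell$ and using subadditivity of $\a$ together with $\a(2\ell)=\a(\ell)$ gives $\a(m+\ell)\le\a(m-\ell)+\a(\ell)$; since $m-\ell\ge0$ (as $\ell\le i\le m-1$) and $\a(k)\le k$ for every $k\ge0$, we get $\a(m+\ell)-\a(\ell)\le\a(m-\ell)\le m-\ell$. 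Hence each term of the displayed sum has $\nu\ge(i-\ell)+\ell+\a(m)-m=i+\a(m)-m$, and therefore so does the sum.

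The only place where care is needed is the initial choice of expansion: the more obvious factorization $(1+2x)^{m+1}(1+x)^{-(m+1)}$ produces terms whose $2$-adic valuations are smaller than the target, so the estimate would have to exploit cancellation among them, whereas rewriting the quotient as $\bigl(1-\tfrac{x}{1+2x}\bigr)^{-(m+1)}$ is precisely what lets a crude term-by-term bound succeed. After that the argument is routine, the binomial identity for $\nu\binom{m+\ell}{\ell}$ and the trivial inequality $\a(k)\le k$ doing all the work.
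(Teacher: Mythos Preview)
Your proof is correct and takes a genuinely different route from the paper's.

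The paper first rewrites the coefficient via a Gould identity as $\pm[x^i]\!\bigl((1-x^2)^{i-1}(1-x)^{m+2-i}\bigr)$, then splits into cases according to the parities of $m$ and $i$; in each case the coefficient is expressed as $\binom{2r}{r}$ times a sum of fractions, and the factor $\binom{2r}{r}$ supplies the $\a(m)$ while an ad hoc estimate bounds the $2$-exponent of each denominator. Your approach instead exploits the identity $\tfrac{1+2x}{1+x}=(1-\tfrac{x}{1+2x})^{-1}$ to produce the expansion $\sum_{\ell}(-2)^{i-\ell}\binom{m+\ell}{\ell}\binom{i-1}{\ell-1}$, after which the termwise bound $\nu\binom{m+\ell}{\ell}\ge \ell+\a(m)-m$ follows from Kummer's formula and the subadditivity $\a(m+\ell)\le\a(m-\ell)+\a(2\ell)$. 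This is shorter, avoids the external identity and the parity case analysis, and makes the role of $\a(m)$ completely transparent. The paper's route, being more explicit about the actual coefficient, could in principle be pushed to a sharper bound (as the author remarks), but for the stated inequality your argument is cleaner.
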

\begin{proof} The desired coefficient equals $\ds\sum_j2^j\tbinom{m+1}j\tbinom{-(m+1)}{i-j}$. A formula of Gould (\cite[(1.71)]{Gou} is $\ds\sum2^j\tbinom xj\tbinom y{i-j}=\sum\tbinom xj\tbinom{x+y-j}{i-j}$, and so our coefficient equals
\begin{eqnarray*}&&\sum_j\tbinom{m+1}j\tbinom{-j}{i-j}=\pm\sum(-1)^j\tbinom{m+1}j\tbinom{i-1}{i-j}\\
&=&\pm([x^i]:(1-x)^{m+1}(1+x)^{i-1})=\pm([x^i]:(1-x^2)^{i-1}(1-x)^{m+2-i}).\end{eqnarray*}
The evaluation of this coefficient  varies slightly with the parity of $m$ and $i$. We consider here the case $m=2r$ and $i=2r-2t$, with $t>0$. Other parities lead to very slight modifications.

Up to sign, the coefficient is
$$\sum_{k=r-2t-1}^{r-t}(-1)^k\tbinom{2r-2t-1}k\tbinom{2t+2}{2r-2t-2k}
=\tbinom{2r}r\sum_{k=r-2t-1}^{r-t}\tfrac{(r\cdots(k+1))\cdot(r\cdots(2r-2t-k))}{(2r)\cdots(2r-2t)}\cdot c_k,$$
where $c_k=(-1)^k\binom{2t+2}{2r-2t-2k}$ is an integer whose value is not relevant. For each value of $k$, either $r-t\ge k+1$ or $r-t\ge 2r-2t-k$. Thus one of the two products in the numerator of the fraction contains the product $r\cdots (r-t)$. We use this to cancel all but the 2 in each of the even factors in the denominator, leaving $2^{t+1}$ times an odd number in the denominator. Thus every term $T$ in the sum has $\nu(T)\ge -(t+1)$. Since $\nu\binom{2r}r=\a(r)=\a(m)$, we obtain that the 2-exponent in our expression is $\ge\a(m)-t-1\ge\a(m)-2t=\a(m)-(m-i)$, as claimed.
\end{proof}
Our proof shows that Proposition \ref{combprop} can be strengthened quite a bit, but we settle for what we need.
 \def\line{\rule{.6in}{.6pt}}

 \section{Obstruction theory: proof of Theorem \ref{immthm}}\label{sec5}
 We adapt \cite[Theorem 5.1]{LP} to our situation as follows.
 \begin{thm}\label{LPthm} The $2m$-manifold $N(\ell)$ can be immersed in $\R^{4m-2}$ if and only if
 \begin{itemize}
 \item[a.] $m$ is even and there exists $y\in H^{2m-2}(N(\ell);\zt)$ such that
 $$i_*(\sq^2y+w_2(\eta)y)=\rho_4(c_m(\eta))\in H^{2m}(N(\ell);\Z_4),\text{ or}$$
 \item[b.] $m$ is odd and $\rho_2(c_m(\eta))=0\in H^{2m}(N(\ell);\zt)$.\end{itemize}\end{thm}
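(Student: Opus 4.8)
\noindent The plan is to turn the immersion question into a lifting problem and then to pin down the single surviving obstruction, adapting the computation of \cite[Theorem 5.1]{LP}. By the Hirsch immersion theorem, $N(\ell)$ immerses in $\R^{4m-2}$ if and only if its stable normal bundle $\eta$ has geometric dimension at most $2m-2$, i.e.\ the classifying map $N(\ell)\to BO$ lifts through $BO(2m-2)\to BO$. The fibre of this map is $W=O/O(2m-2)$, which is $(2m-3)$-connected with $\pi_{2m-2}(W)\cong\Z$ and $\pi_{2m-1}(W)$ isomorphic to $\Z_4$ when $m$ is even and to $\zt$ when $m$ is odd; this is a classical computation of the low-dimensional homotopy of stable Stiefel manifolds (see \cite{LP} and its references) and is the source of the dichotomy in the statement. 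Two facts from earlier make the set-up go through: $\eta$ is stably complex by Corollary \ref{tan}, so $c_m(\eta)\in H^{2m}(N(\ell);\Z)$ is defined; and $N(\ell)$ is orientable, since by Theorem \ref{cohthm} we have $H^1(N(\ell);\zt)=0$, so the coefficient group $\pi_{2m-2}(W)$ is the untwisted $\Z$.

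First I would dispose of the primary obstruction. It lies in $H^{2m-1}(N(\ell);\pi_{2m-2}(W))=H^{2m-1}(N(\ell);\Z)$, which vanishes by Theorem \ref{cohthm} (the cohomology of $N(\ell)$ is torsion-free and in even degrees); this merely reproves that $N(\ell)$ immerses in $\R^{4m-1}$. Because $H^{2m-1}(N(\ell);\Z)=0$, the partial lift over the $(2m-1)$-skeleton is unique up to homotopy over the $(2m-2)$-skeleton, so there is exactly one further obstruction, a class
$$\theta\in H^{2m}\bigl(N(\ell);\pi_{2m-1}(W)\bigr),$$
well defined modulo the indeterminacy built from the two-stage Postnikov description of $W$ over $BO$: the image of the relevant $k$-invariant applied to $H^{2m-2}(N(\ell);\pi_{2m-2}(W))$, plus a term recording the action of the characteristic classes of $\eta$.

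The heart of the proof is the explicit evaluation of $\theta$ and its indeterminacy, and this is precisely \cite[Theorem 5.1]{LP}, which I would transcribe into the present setting. The degree-$2m$ $k$-invariant of $W$ is, up to the coefficient inclusion $\zt\hookrightarrow\Z_4$ in the even case, $\sq^2$ composed with mod-$2$ reduction, so its contribution to the indeterminacy is $\{\sq^2y:y\in H^{2m-2}(N(\ell);\zt)\}$ (the reduction $H^{2m-2}(N(\ell);\Z)\to H^{2m-2}(N(\ell);\zt)$ being onto, as $H^{2m-2}$ is free), while the characteristic-class term contributes $w_2(\eta)y$; together these give the combination $\sq^2y+w_2(\eta)y$. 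The class $\theta$ itself is identified with $\rho_4(c_m(\eta))$ when $m$ is even — it lies in $i_*H^{2m}(N(\ell);\zt)$ since $\rho_2(c_m(\eta))=w_{2m}(\eta)$ is the mod-$2$ shadow of the primary reduction obstruction — and with $\rho_2(c_m(\eta))$ when $m$ is odd. Requiring $\theta$ to vanish modulo its indeterminacy then yields exactly (a) and (b). I also expect that, by the Wu formula together with the identity $w_2(\eta)=v_2(N(\ell))$ valid for the normal bundle of any closed manifold, the expression $\sq^2y+w_2(\eta)y$ vanishes for every $y$, so that (a) simplifies to $\rho_4(c_m(\eta))=0$; this will streamline the proof of Theorem \ref{immthm}.

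The step I expect to be the main obstacle is the $\Z_4$-bookkeeping in the even case: checking that $\rho_4(c_m(\eta))$ — not some other integral lift of $w_{2m}(\eta)$ — is the correct representative of $\theta$, and that the indeterminacy is exactly $i_*\{\sq^2y+w_2(\eta)y\}$ and not a larger or smaller subgroup of $H^{2m}(N(\ell);\Z_4)$. This amounts to following the obstruction computation of \cite{LP} carefully through the relevant Postnikov stage, the needed hypotheses on $N(\ell)$ — orientability, torsion-free cohomology concentrated in even degrees, and a stably complex normal bundle — having already been recorded above; I do not anticipate any new conceptual difficulty.
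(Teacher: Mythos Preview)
Your proposal is correct and in fact considerably more detailed than what the paper does: the paper does not prove this theorem at all, but simply states it as an adaptation of \cite[Theorem~5.1]{LP}, explaining only the notation $c_m(\eta)$, $i_*$, $\rho_2$, $\rho_4$. Your obstruction-theoretic outline (Hirsch reduction to a lifting problem, analysis of the low homotopy of the Stiefel fibre, vanishing of the primary obstruction from the even-degree cohomology of $N(\ell)$, and identification of the secondary obstruction and its indeterminacy) is exactly the argument underlying \cite{LP}, so there is no divergence of approach.

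One remark worth making: your closing observation that $\sq^2 y + w_2(\eta)y$ vanishes identically by the Wu formula (since $w_2(\eta)=w_2(\tau)=v_2$ on an orientable closed manifold) is correct, and it is sharper than what the paper does later. In the proof of Theorem~\ref{immthm} the paper verifies this vanishing by an explicit case-by-case computation on the basis elements $R^{m-1}$ and $V_i^{m-1}$ of $H^{2m-2}(N_{n,k};\zt)$, checking separately that $\sq^2 y\ne 0$ and $w_2(\eta)y\ne 0$ so that their sum is $0$. Your Wu-formula argument subsumes that computation in one line.
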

 Here $c_m(\eta)\in H^{2m}(N(\ell);\Z)$ is the Chern class of the stable normal bundle $\eta$, and $i_*$, $\rho_4$, and $\rho_2$ are induced by coefficient homomorphisms $\zt\hookrightarrow\Z_4$, $\Z\twoheadrightarrow\Z_4$, and $\Z\twoheadrightarrow\Z_2$.

 \begin{proof}[Proof of Theorem \ref{immthm}]
 By Proposition \ref{HKprop}, the total Chern class of the normal bundle $\eta$ for any $N(\ell)$ is
 \begin{equation}\label{chern}\prod_{i=1}^k(1+2V_i+R)^{-1}\cdot(1+R)^{-(m+1-k)}.\end{equation}
 Using (\ref{nice}), we have
 \begin{eqnarray*}(1+2V_i+R)^{-1}&=&\sum_{j\ge0}(-1)^j(2V_i+R)^j\\
 &=&\sum_{j\ge0}(-1)^jR^j+2\sum_{j\text{ odd}}(-1)^jV_i^j\\
 &=&(1+R)^{-1}-2V_i(1-V_i^2)^{-1}.\end{eqnarray*}

 Now we specialize to $N_{n,k}$ and $N_{n,k,1}$.
 Since $V_iV_j=0$ for $i\ne j$ in $H^*(N_{n,k})$, and $R$ acts as $-V_i$ when multiplied by $V_i$, we obtain in $H^*(N_{n,k})$
 \begin{eqnarray*}\prod_{i=1}^k(1+2V_i+R)^{-1}&=&(1+R)^{-k}-2(1+R)^{-(k-1)}\sum_{i=1}^kV_i(1-V_i^2)^{-1}\\
 &=&(1+R)^{-k}-2\sum_{i=1}^kV_i(1-V_i^2)^{-1}(1-V_i)^{-(k-1)}.\end{eqnarray*}
 In $H^*(N_{n,k,1})$, there are additional terms since $V_1V_j\ne0$, but they will be divisible by 4.
 Thus, from (\ref{chern}), in both $N_{n,k}$ and $N_{n,k,1}$,
 \begin{equation}\label{cm}c_m(\eta)\equiv[(1+R)^{-(m+1)}-2\sum_{i=1}^kV_i(1-V_i^2)^{-1}(1-V_i)^{-m}]_{2m}\mod 4,\end{equation}
 where $[-]_{2m}$ denotes the component in grading $2m$. (Recall $|R|=|V_i|=2$.)
 If $m$ is odd, the coefficient of $R^m$ in $(1+R)^{-(m+1)}$ is even, and so Theorem \ref{LPthm}(b) implies the immersion.

Now we restrict to even values of $m$.
 Mod 2, we have, with $|V|=2$,
 $$[V(1-V^2)^{-1}(1-V)^{-m}]_{2m}\equiv V[(1+V)^{-(m+2)}]_{2(m-1)}=\tbinom{-(m+2)}{m-1}V^m\equiv\tbinom{2m}{m-1}V^m.$$
This coefficient is even when $m$ is even, and so the second part of (\ref{cm}) is 0 mod 4.

 Since $\nu\binom{-(m+1)}m=\nu\binom{2m}m=\a(m)$, we find that, mod 4,  $c_m(\eta)$ equals $2R^m$ if $m$ is a 2-power, and is 0 otherwise, so we obtain the immersion when $m$ is not a 2-power.
 By Theorem \ref{nk1coh}, $R^m=0$ in $H^{2m}(N_{n,k,1})$ and so we obtain the immersion in this case.
 By Proposition \ref{cohnk}, $2R^m\ne0\in H^{2m}(N_{n,k};\Z_4)$  iff $k$ is even, and so we obtain the immersion when $m$ is a 2-power and $k$ is odd.
 Now we evaluate the indeterminacy, $\sq^2y+w_2(\eta)y$, when $m$ is a 2-power and $k$ is even.

 Note that $H^{2m}(N_{n,k};\zt)\approx\zt$, so we just say whether terms are 0. We have $\sq^2(V_i^{m-1})\ne0$ since $m$ is even, and $\sq^2(R^{m-1})\ne0$ since $m$ is even and $k$ is even.
 Also, $w_2(\eta)=(m+1)R$, so $w_2(\eta)V_i^{m-1}\ne0$ since $m$ is even, and $w_2(\eta)R^{m-1}\ne0$ since $m$ is even and $k$ is even. So the indeterminacy is 0 in all cases, and we obtain the nonimmersion for $N_{n,k}$ when $m$ is a 2-power and $k$ is even.
 \end{proof}


\begin{thebibliography}{99}
\bibitem{Ad} J.F.Adams, {\em On Chern characters and the structure of the unitary group}, Proc Camb Phil Soc {\bf 57} (1961) 189--199.
\bibitem{Baker}  A.Baker, {\em On the spaces classifying complex vector bundles with given real dimension}, Rocky Mountain J. Math. {\bf 16} (1986),  703-–716.
\bibitem{D3} D.M.Davis, {\em Topological complexity of planar polygon spaces with small genetic code}, Forum Math {\bf 29} (2017) 313--328.
\bibitem{mf} \line, {\em Manifold properties of planar polygon spaces}, Topology and its Applications {\bf 250} (2018) 27--36.
\bibitem{DCP} \line, {\em Some new immersion results for complex projective spaces}, Proc Edinburgh Math Soc. {\bf 51} (2008) 27--44.
\bibitem{Gou} H.W.Gould, {\em Combinatorial identities}, {\tt https://www.math.wvu.edu/~gould/Vol.4.PDF}.
\bibitem{H} J.-C.~Hausmann, {\em Geometric descriptions of polygon and chain spaces}, Contemp Math Amer Math Soc {\bf 438} (2007) 47--58.
\bibitem{HK} J.-C.~Hausmann and A.Knutson, {\em The cohomology rings of polygon spaces}, Ann Inst Fourier {\bf 48} (1998) 281--321.
\bibitem{HR} J.-C.~Hausmann and E.Rodriguez, {\em The space of clouds in an Euclidean space}, Experimental Math {\bf 13} (2004) 31--47.
    \bibitem{web} \line, addendum to \cite{HR}, {\tt http://www.unige.ch/math/folks/hausmann/polygones}.
\bibitem{Kl} A.Klyachko, {\em Spatial polygons and stable configurations of points in the projective line}, Algebraic Geometry and its Applications (Yaroslavl, 1992) Aspects of Math, Vieweg, Braunschweig (1994) 67--84.
    \bibitem{LP} B.H.Li and F.P.Peterson, {\em Immersions of $n$-manifolds into $(2n-2)$-manifolds}, Proc Amer Math Soc {\bf 97} (1986) 531--538.
\end{thebibliography}
\end{document}